\newcommand{\R}{{\mathbb R}}
\newcommand{\N}{{\mathbb N}}
\newcommand{\E}{{\mathbb E}}
\newcommand{\PP}{{\mathbb P}}
\newcommand{\ind}{{\textbf{1}}}
\theoremstyle{plain}
\newtheorem{theorem}{Theorem}[section]
\newtheorem{lemma}[theorem]{Lemma}
\theoremstyle{definition}
\newtheorem{remark}{Remark}[section]
\newcommand{\mysection}{\setcounter{equation}{0} \section}
\begin{document}

\title{Density stability for some L\'evy-driven Stochastic Differential Equations.}

\author{L. Huang
}\address{Higher School of Economics, Moscow, lhuang@hse.ru}

\date{\today}

\begin{abstract}
We consider a Stochastic Differential Equation driven by a L\'evy process whose L\'evy measure satisfy a tempered stable domination.
We study how a perturbation of the coefficients reflects on the density of the solution.
We quantify the distance between the densities in term of the proximity of the coefficients.
This extend to the stable case the works of \cite{kona:kozh:meno:15}, where the noise is Gaussian.
\end{abstract}

\subjclass{60H10,60H30,60F05}

\keywords{Stochastic Approximation, parametrix, Density Bounds}

\maketitle

\mysection{Introduction and Main Result}

This work is motivated by the recent article of Konakov \textit{et al.} \cite{kona:kozh:meno:15}, where the authors studied the sensitivity of the density of a diffusion process with respect to a perturbation on the coefficients.
In this paper, we aim to extend their result, formulated for the Brownian motion to other types of noises.
Specifically to the class of L\'evy processes whose L\'evy measure satisfy the domination \textbf{[H-1]} below.
For such L\'evy process $Z$, we consider the following SDE:
\begin{equation}\label{EDS1}
dX_t = b(t,X_t)dt + \sigma(t,X_{t^-})dZ_t.
\end{equation}
We are interested in a perturbation of this equation:
\begin{equation}\label{EDS1N}
dX_t^n = b_n(t,X_t^n)dt + \sigma_n(t,X_{t^-}^n)dZ_t.
\end{equation}
We assume that the coefficients $b$ and $\sigma$ can be obtained as a limit: $b_n \rightarrow b$ and $\sigma_n \rightarrow \sigma$, and we aim to control the distance between the densities of $X_t$ and $X_t^n$.
Quantifying such a distance can be useful for applicative purposes. See \cite{kona:kozh:meno:15} for a review of the literature.

The idea is to exploit the parametrix series to obtain explicit representations for the densities.
The expansions can in turn be linked to the coefficients of the SDE.
This allows us to control the distance between the densities using the distance between the coefficients.
Let us mention that the parametrix representation proves to be very useful when tracking the dependency of the density in some parameter of the coefficients.
For instance, we mention the works of Bally and Kohatsu-Higa \cite{ball:koha:14} who discuss regularity of the density with respect to the coefficients of the SDE.

The parametrix technique for SDEs has been studied in many occurrences. 
Let us mention the works of Kolokoltsov \cite{kolo:00} who first studied the stable case, more recent works of
Knopova and Kulik \cite{knop:kuli:14,knop:kuli:15}, and Huang \cite{huan:15}.
In this paper, we aim to extend the result of Konakov \textit{et al.} formulated for a Brownian noise to the setting of \cite{huan:15}.


We denote by \textbf{[H]} the following set of assumptions.
\begin{trivlist}
\item[\textbf{[H-1]}] $(Z_t)_{t\ge 0}$ is a symmetric L\'evy process without Gaussian part.
We denote by $\nu$ its L\'evy measure.
There is a non increasing function $\bar{q}: \R_+ \rightarrow \R_+$, $\mu$ a bounded measure on $S^{d-1}$, and $\alpha\in(0,2)$ such that:
\begin{eqnarray}
\nu(A) \le \int_{S^{d-1}} \int_0^{+\infty} \ind_{A}(s\theta) \frac{\bar q(s)}{s^{1+\alpha}}ds \mu(d\theta) = m(A). 
\end{eqnarray}
We assume one of the following:
\begin{itemize}
\item[\textbf{[H-1a]}] $\mu$ has a density with respect to the Lebesgue measure on the sphere.
\item[\textbf{[H-1b]}] there exists $\gamma\in[1,d]$ such that $\mu \left(B(\theta,r) \cap S^{d-1} \right) \le C r^{\gamma -1},$
with $\gamma+\alpha>d$, and for all $s>0$, there exists $C>0$ such that $\bar{q}(s) \le C \bar{q}(2s)$.
\end{itemize}

\item[\textbf{[H-2]}] 
Denoting by $\varphi_Z$ the L\'evy-Kintchine exponent of $(Z_t)_{t\ge 0}$,
there is $K>0$ such that :
\begin{equation}
\E\left( e^{i\langle p,Z_t \rangle}\right) = e^{t\varphi_Z(p)} \le e^{-Kt|p|^\alpha}, \ |p| >1.
\end{equation}

\item[\textbf{[H-3]}] $b, b_n: \R_+\times\R^d \rightarrow \R^d$ are measurable and bounded and
$\sigma,\sigma_n: \R_+\times \R^d \rightarrow \R^d\otimes\R^d$ are bounded and $\eta$-H\"older continuous $\eta \in (0,1)$.
Also, when $\alpha \le 1$, we impose $b,b_n=0$.

\item[\textbf{[H-4]}]  $\sigma,\sigma_n$ are uniformly elliptic. For all $x,\xi \in \R^d$, there exists $\kappa>1$ such that:
\begin{equation}
\kappa^{-1} |\xi|^2 \le \langle \xi, \sigma(t,x) \xi \rangle \le \kappa |\xi|^2, \ \ \ \kappa^{-1} |\xi|^2 \le \langle \xi, \sigma_n(t,x) \xi \rangle \le \kappa |\xi|^2.
\end{equation}

\item[\textbf{[H-5]}]  For all $A \in \mathcal{B}$, Borelian, we define the measure:
\begin{equation}
\nu_t(x,A)= \nu\big( \{z \in \R^d; \ \sigma(t,x)z \in A \}\big).
\end{equation}

We assume these measures to be H\"older continuous with respect to the first parameter, that is, 
for all $ A \in \mathcal{B}$, 
$$
|\nu_t(x,A) - \nu_t(x',A)|+ |\nu^{n}_t(x,A) - \nu^{n}_t(x',A)|  \le C \delta\wedge|x-x'|^{\eta(\alpha\wedge 1)} m(A).
$$

\item[\textbf{[H-6]}] We assume the following stability conditions. There is a sequence $\Delta_n \underset{n \rightarrow +\infty}{\longrightarrow} 0$ such that
\begin{itemize}
\item Stability for the L\'evy measures
for all $A \in \mathcal{B}$, $t\le T$,
$$
\sup_{x \in \R^{d}} |\nu_t(x,A) - \nu_t^n(x,A)| + \sup_{x\neq x'} \frac{|\nu_t(x,A) - \nu_t(x',A) - \nu_t^n(x,A) +\nu_t^n(x',A)|}{\delta \wedge|x-x'|^{\eta(\alpha\wedge 1)}} \le \Delta_n m(A).
$$

\item Stability for the drift coefficient: 
$$\sup_{(t,x)\in \R_+\times \R^d }|b(t,x) - b_n(t,x)| \le C \Delta_n.$$
\end{itemize}

\end{trivlist}

\begin{remark}
The assumptions \textbf{[H-5]} and \textbf{[H-6]} can seem restrictive.
However, in the case where $\sigma$ is real and a general L\'evy measure
this assumption follows from the control in H\"older norm formulated in \cite{kona:kozh:meno:15}.
\end{remark}



It has been shown in \cite{huan:15} that under assumptions \textbf{[H-1]} to \textbf{[H-5]}, the parametrix representation for the densities of 
$(X_t)_{t\ge 0}$ and $(X_t^n)_{t\ge 0}$ holds. We will use the following notations throughout this paper:
\begin{eqnarray*}
p(t,s,x,z)dz = \PP( X_s \in dz | X_t = x), \ p_n(t,s,x,z)dz = \PP( X^n_s \in dz | X^n_t = x).
\end{eqnarray*}

Moreover, the following density bound holds:
$$
p(t,T,x,y) + p_n(t,T,x,y) \le C \bar{p}(t,T,x,y):= \frac{(T-t)^{-d/\alpha}}{\left(1+ \frac{|y-x|}{(T-t)^{1/\alpha}} \right)^{\gamma+\alpha}} Q(|y-x|),
$$
where when \textbf{[H-1a]} holds, we take $Q=\bar{q}$ and set $\gamma=d$
and when \textbf{[H-1b]} holds, we take $\bar{Q}(\rho) = \min(1, \rho^{\gamma-1}) \bar{q}(\rho)$. 
The aim of this paper is to quantify the distance between $p$ and $p_n$ using $\bar{p}$ and $\Delta_n$.
Our main result is the following:

\begin{theorem}\label{main_result}
Fix a finite time horizon $T>0$. There exists $C>0$, for all $t\le T$ and all $x,y \in \R^d$,  such that
$$|(p-p_n)(t,T,x,y)| \le C \Delta_n \bar{p}(t,T,x,y).$$
\end{theorem}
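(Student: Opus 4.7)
The plan is to exploit the parametrix representation from \cite{huan:15} for both densities and compare the two series term by term. Writing $\tilde p(t,s,x,y)$ for the density of the ``frozen'' process associated with $X$ (coefficients $b,\sigma$ frozen at the terminal point $y$) and $\tilde p_n$ for the analogue associated with $X^n$, one has
$$
p(t,T,x,y) = \sum_{k\ge 0}(\tilde p\otimes H^{(k)})(t,T,x,y), \qquad H = (L - \tilde L)\tilde p,
$$
and the same identity for $p_n$, $H_n$, with $L,L_n$ the generators of the two SDEs and $\otimes$ the standard time--space convolution. Recall that under \textbf{[H-1]}--\textbf{[H-5]}, the kernel obeys a smoothing bound of the form $|H(t,s,x,y)|\le C(s-t)^{-1+\eta(\alpha\wedge 1)/\alpha}\bar p(t,s,x,y)$ and the iterated convolutions converge through the usual Beta/Gamma-function mechanism.

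The heart of the argument consists in two pointwise stability estimates. The first is a stability estimate for the frozen densities, $|\tilde p - \tilde p_n|(t,T,x,y)\le C\Delta_n\bar p(t,T,x,y)$, obtained by Fourier inversion: both $\tilde p$ and $\tilde p_n$ are represented as inverse Fourier transforms of exponentials built from the L\'evy--Khintchine symbols associated with $\sigma(s,y)$ and $\sigma_n(s,y)$, and the $\sup_x$ part of \textbf{[H-6]} (evaluated at $x=y$) controls the difference of those symbols up to a factor $\Delta_n$; when $\alpha>1$ the drift contribution is absorbed by the uniform bound on $b-b_n$. The second is a kernel stability estimate $|H - H_n|(t,s,x,y)\le C\Delta_n(s-t)^{-1+\eta(\alpha\wedge 1)/\alpha}\bar p(t,s,x,y)$. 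For this one uses the algebraic identity
$$
H - H_n = \bigl[(L - L_n) - (\tilde L - \tilde L_n)\bigr]\tilde p + (L_n - \tilde L_n)(\tilde p - \tilde p_n).
$$
The first summand is handled by the second-order part of \textbf{[H-6]} applied to $(\nu_t(x,\cdot)-\nu_t^n(x,\cdot))-(\nu_t(y,\cdot)-\nu_t^n(y,\cdot))$ tested against the first-order increment of $\tilde p$, while the second summand is a standard parametrix kernel acting on $\tilde p-\tilde p_n$ and is bounded by combining the frozen-density stability already established with the H\"older control on $\nu_t^n$ from \textbf{[H-5]}.

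Granted these two estimates, one expands the difference through the telescoping identity
$$
\tilde p\otimes H^{(k)} - \tilde p_n\otimes H_n^{(k)} = (\tilde p - \tilde p_n)\otimes H^{(k)} + \tilde p_n\otimes\sum_{j=0}^{k-1} H_n^{(j)}\otimes(H - H_n)\otimes H^{(k-1-j)}.
$$
Each summand contains exactly one factor carrying the extra $\Delta_n$, while the remaining factors obey the unperturbed parametrix bounds recalled above. The standard Beta-function bookkeeping then yields a bound of order $C^{k+1}(k+1)\Delta_n\,\bar p(t,T,x,y)[\Gamma(\eta(\alpha\wedge 1)/\alpha)]^k/\Gamma((k+1)\eta(\alpha\wedge 1)/\alpha)$ for the $k$-th term, and summation in $k$ produces the announced estimate.

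The main obstacle is the kernel stability estimate, specifically the requirement that the $\Delta_n$ factor be extracted while \emph{retaining} the time singularity $(s-t)^{-1+\eta(\alpha\wedge 1)/\alpha}$ which is what ultimately makes the parametrix series convergent. This is exactly the point where the joint H\"older--stability part of \textbf{[H-6]} has to be used at full strength: a crude application of the $\sup_x$-bound alone would yield the right $\Delta_n$ factor but destroy the smoothing gain, so one must first exploit the second-order cancellation between $\nu_t(x,\cdot)-\nu_t(y,\cdot)$ and $\nu_t^n(x,\cdot)-\nu_t^n(y,\cdot)$ paired against the first-order difference of $\tilde p$, and only then revert to the cruder $\Delta_n\,m(A)$ control on the remaining piece.
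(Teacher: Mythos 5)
Your overall strategy coincides with the paper's: compare the two parametrix series term by term, establish a $\Delta_n$-stability bound for the frozen densities by Fourier inversion and for the parametrix kernels by an algebraic split, and then resum. The split you use for $H-H_n$,
\begin{equation*}
H - H_n = \bigl[(L - L_n) - (\tilde L - \tilde L_n)\bigr]\tilde p + (L_n - \tilde L_n)(\tilde p - \tilde p_n),
\end{equation*}
is a valid alternative to the paper's $(L - \tilde L)(\tilde p - \tilde p_n) + \bigl[(L - \tilde L) - (L_n - \tilde L_n)\bigr]\tilde p_n$; the two differ only in which generator acts on the density difference, and neither buys an advantage. Likewise your full telescoping identity is just the unrolled form of the paper's one-step recursion $\tilde p\otimes H^{(m+1)} - \tilde p_n\otimes H_n^{(m+1)} = (\tilde p\otimes H^{(m)} - \tilde p_n\otimes H_n^{(m)})\otimes H + \tilde p_n\otimes H_n^{(m)}\otimes(H - H_n)$, treated by induction. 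Your closing remark on having to exploit the joint H\"older--stability part of \textbf{[H-6]} rather than the crude $\sup_x$ bound is exactly right and is what drives Lemma \ref{CTR_H-H_n}.

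There is, however, a genuine gap in the convergence bookkeeping. You invoke a smoothing bound $|H(t,s,x,y)|\le C(s-t)^{-1+\eta(\alpha\wedge1)/\alpha}\bar p(t,s,x,y)$ (and the analogue $|H-H_n|\le C\Delta_n(s-t)^{-1+\eta(\alpha\wedge1)/\alpha}\bar p$), then close the series with the usual Gamma-function quotient. That is the Gaussian-type estimate; in the stable setting it is false. Because $\bar p$ has only polynomial tails, the spatial modulation in the kernel cannot be converted into a time power: the correct bound, which the paper uses throughout, is $|H|+|H_n|\le C\,\bar H$ with $\bar H(t,T,x,y)=\frac{\delta\wedge|y-x|^{\eta(\alpha\wedge1)}}{T-t}\bar p(t,T,x,y)$, and in the off-diagonal regime $|y-x|>(T-t)^{1/\alpha}$ one has $\frac{\delta\wedge|y-x|^{\eta(\alpha\wedge1)}}{T-t}\gg(T-t)^{-1+\eta(\alpha\wedge1)/\alpha}$, so your stated bound on $H$ does not hold. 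Consequently the plain Beta/Gamma accounting collapses; what is actually needed is the two-track sequence $\rho_m$ of \cite{huan:15}, which separately tracks the $\bar p$- and $\rho$-type contributions and gains the time power $(T-t)^{\omega}$ only every two convolutions, together with the induction step $\rho_m\otimes\bar H\le C_0\,\rho_{m+1}$ used in \eqref{IND_RHOxH}. A related point: to control $(L_n-\tilde L_n)(\tilde p-\tilde p_n)$ you need more than the pointwise bound $|\tilde p-\tilde p_n|\le C\Delta_n\bar p$, since $L_n-\tilde L_n$ is a singular nonlocal operator; the Fourier argument you sketch has to be upgraded to the statement of Lemma \ref{CTRL_DERIV_DENS}, giving $\Delta_n$-stability for $\Phi_t(x,\nabla_x)(\tilde p-\tilde p_n)$ uniformly over pseudo-differential operators of order $\beta$, with the singularity $(T-t)^{-\beta/\alpha}$ explicitly tracked.
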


The rest of this paper is organised as follows.
In Section \ref{The parametrix Setting}, we recall the background on the parametrix technique.
In Section \ref{Proof of the Main Result}, we prove the main result, using technical results that we prove in Section \ref{Preliminary results}.

\begin{remark}
The bounded drift assumption comes from the fact that in the Lipschitz case, the density estimate for the solution of the SDE involves the flow of the ODE associated to the considered SDE. In our case, we would have to compare the flows $\theta_{t,T}(y)$ and $\theta_{t,T}^n(y)$, respectively solutions of the ODEs:
$$
\frac{dx_t}{dt}= b(t,x_t), \ \frac{dx^n_t}{dt}= b_n(t,x^n_t), \ x_T = x_T^n=y.
$$
Thus, a comparison of the two flows has to be done.
Let us mention a similar procedure developed in \cite{dela:meno:10}, where the initial flow is compared to its linearization.

\end{remark}

\mysection{The parametrix Setting}\label{The parametrix Setting}

In this Section, we recall some facts about the parametrix representation.
The reader may consult Kolokoltsov \cite{kolo:00} or Huang \cite{huan:15} for a more extensive presentation.
The idea is to set up a parametrix for both the original SDE $(X_t)_{t\ge 0}$ and the perturbed one $(X_t^n)_{t\ge 0}$, and compare the two representations.
To that end, we define two frozen processes.
Fix a terminal position $y\in \R^d$ and consider:
$$
\tilde{X}_s = x + \int_t^s \sigma(u,y) dZu, \ \tilde{X}_s^n = x + \int_t^s \sigma_n(u,y) dZu.
$$

We omit the dependencies $\tilde{X}_s= \tilde{X}_s(t,x,y)$ and $\tilde{X}_s^n= \tilde{X}_s^n(t,x,y)$ in order to simplify the notations.
Formally, $\tilde{X}_s$ (resp. $\tilde{X}_s^n$) is the candidate to approximate $X_s$ (resp. $X_s^n$).
From assumption \textbf{[H-2]}, it is clear that the frozen processes have densities.
Those densities depend on the frozen parameter $y$, and we write $\tilde{p}(t,T,x,y)=\tilde{p}^y(t,T,x,y)$ (resp.
$\tilde{p}_n(t,T,x,y)=\tilde{p}_n^y(t,T,x,y)$).
The following estimate is proved in \cite{huan:15}.
\begin{lemma}
Fix a terminal time $T>0$. There exist $C>0$ such that for all $x,y\in \R^d$:
$$
\tilde{p}(t,T,x,y) + \tilde{p}_n(t,T,x,y) \le C \bar{p}(t,T,x,y).
$$
\end{lemma}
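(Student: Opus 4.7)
The plan is to exploit that the frozen processes are stochastic integrals of deterministic matrix-valued functions against the L\'evy process $Z$, so their characteristic functions are explicit. Writing
$$\E\!\left[e^{i\langle p, \tilde{X}_s-x\rangle}\right] = \exp\!\left(\int_t^s \varphi_Z(\sigma(u,y)^* p)\, du\right),$$
and similarly for $\tilde{X}^n_s$, the density is obtained by Fourier inversion:
$$\tilde{p}(t,T,x,z) = \frac{1}{(2\pi)^d}\int_{\R^d} e^{-i\langle p, z-x\rangle} \exp\!\left(\int_t^T \varphi_Z(\sigma(u,y)^* p)\, du\right) dp.$$
The uniform ellipticity \textbf{[H-4]} yields $|\sigma(u,y)^* p|\ge \kappa^{-1/2}|p|$, and combined with \textbf{[H-2]} this gives the decay $\big|\exp(\int_t^T \varphi_Z(\sigma(u,y)^* p)du)\big|\le \exp(-c(T-t)|p|^\alpha)$ for $|p|\ge (T-t)^{-1/\alpha}$. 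Treating the low-frequency regime separately using that $\varphi_Z$ is bounded on compact sets immediately produces the on-diagonal bound $\tilde{p}(t,T,x,z)\le C(T-t)^{-d/\alpha}$. The same computation applies verbatim to $\tilde{p}_n$.

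The hard part is the off-diagonal decay that captures the polynomial factor $(1+|z-x|/(T-t)^{1/\alpha})^{-(\gamma+\alpha)}$ and the tempering $Q(|z-x|)$, because these features live in the L\'evy measure and are not visible on the exponent $\varphi_Z$ alone. I would proceed by a L\'evy--It\^o splitting at the natural scale $r=(T-t)^{1/\alpha}$: decompose $Z = Z^{(\le r)} + Z^{(> r)}$ into its small- and large-jump components (together with the required compensator adjustments, which are harmless under \textbf{[H-3]}). The frozen density then factorizes as a convolution
$$\tilde{p}(t,T,x,z) = \tilde{p}^{\le r}(t,T,x,\cdot) \ast \mathcal{L}(\sigma(\cdot,y)Z^{(>r)}_{T-t})(z-x).$$
The small-jump part $\tilde{p}^{\le r}$ has a sub-exponential characteristic function (its symbol inherits the $-c(T-t)|p|^\alpha$ decay plus exponential suppression beyond frequency $r^{-1}$) and therefore satisfies Gaussian-like tails at scale $r$, in particular the bound $C r^{-d}(1+|z-x|/r)^{-(\gamma+\alpha)}$. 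The large-jump law is a compound Poisson, whose total mass on $\{|w|>\rho\}$ is controlled by $T\,m(\{|w|>\rho\})$; using \textbf{[H-1a]} or \textbf{[H-1b]} this gives exactly the factor $Q(|z-x|)$ in the tail via either a direct integration against $\bar q(s)/s^{1+\alpha}$ (case (a), with $\gamma=d$) or an angular integration using the covering estimate $\mu(B(\theta,r)\cap S^{d-1})\le Cr^{\gamma-1}$ and the doubling property of $\bar q$ (case (b)).

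The final step is to combine these two regimes: on the diagonal region $|z-x|\le r$ the Fourier bound dominates and matches $\bar{p}(t,T,x,z)$ directly (since $Q$ is bounded near zero and the polynomial factor is $O(1)$); in the region $|z-x|>r$ the convolution inequality, combined with the tail mass of $Z^{(>r)}$, yields the polynomial decay $(|z-x|/r)^{-(\gamma+\alpha)}$ together with the tempering $Q(|z-x|)$. Since the estimates only use \textbf{[H-1]}--\textbf{[H-4]} and the same bounds hold uniformly in $y$ (the frozen parameter enters only through $\sigma(\cdot,y)$ which satisfies the same uniform ellipticity), the argument applies identically to $\tilde{p}_n$, and summing produces the claimed bound with a constant $C$ depending on $T$, $\kappa$, $K$, and the constants in \textbf{[H-1]}.
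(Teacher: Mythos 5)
Your proposal is correct and follows essentially the same route as the paper, which defers the details to Huang \cite{huan:15} and Sztonyk \cite{szto:10}: represent the frozen process as a L\'evy process satisfying \textbf{[H]} via uniform ellipticity of $\sigma$, then estimate its density through the L\'evy--It\^o decomposition at scale $(T-t)^{1/\alpha}$, with the martingale part contributing arbitrary polynomial decay and the compound Poisson part carrying the polynomial exponent $\gamma+\alpha$ and the tempering $Q$. One small terminological caveat: the small-jump density $p_M$ has faster-than-any-polynomial decay (Schwartz-type), not Gaussian tails; the specific exponent $\gamma+\alpha$ you quote is just one admissible choice of $m$ in the scale of bounds $p_M(u,\cdot)\le C_m u^{-d/\alpha}(1+|\cdot|/u^{1/\alpha})^{-m}$, which is exactly what the Sztonyk argument uses.
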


This estimate is established by comparing the frozen process to the driving noise. 
We know from Sztonyk \cite{szto:10} that under \textbf{[H]}, density estimates holds for the driving process $(Z_t)_{\ge 0}$.
Then, we can transfer the estimates on the frozen process thanks to the uniform ellipticity of $\sigma, \sigma_n$. See \cite{huan:15} for details.

The second step in the parametrix technique is to link the frozen process to the initial SDE. To that end, we define the integro-differential operator:
$$
L_t(z,\nabla_x)\varphi(x) = \langle b(t,z) ,\nabla_x \varphi(x) \rangle + \int_{\R^d} \varphi(x +\sigma(t,z) \xi) - \varphi(x) - \langle \nabla_x \varphi(x) ,\sigma(t,z) \xi\rangle \ind_{\{|\xi| \le 1\}} \nu(d\xi).
$$
Observe that when $x\in \R^d$ is the initial condition of \eqref{EDS1}, then $L_t(x,\nabla_x)$ is the generator of $(X_t)_{t\ge 0}$ and for $y\in\R^d$ a fixed terminal point, $L_t(y,\nabla_x)$ is the generator of the frozen process.
With obvious notations, we also denote $L_t^n(z,\nabla_x)$ the generators of the perturbed equation, with $b_n$ and $\sigma_n$ instead of $b$ and $\sigma$.
We have the following result.

\begin{theorem}

Under assumptions \textbf{[H]}, $X_T$ (resp. $X_T^n$) has a density with respect to the Lebesgue measure, and the following representation holds:
$$
p(t,T,x,y) = \sum_{k \ge 0} \tilde{p}\otimes H^{(k)}(t,T,x,y), \ p_n(t,T,x,y) = \sum_{k \ge 0} \tilde{p}_n\otimes H_n^{(k)}(t,T,x,y), 
$$
where $H(t,T,x,y) = \Big(L_t(x,\nabla_x) - L_t(y,\nabla_x)\Big)\tilde{p}(t,T,x,y)$ and $H_n(t,T,x,y) = \Big(L^n_t(x,\nabla_x) - L^n_t(y,\nabla_x)\Big)\tilde{p}_n(t,T,x,y)$.
Besides, $\otimes$ denote the space time convolution:
$$
\varphi \otimes \psi (t,T,x,y) = \int_t^Tdu \int_{\R^d} \varphi(t,u,x,z) \psi (u,T,z,y) dz, \ \varphi^{(k)}(t,T,x,y) = \varphi^{(k-1)}\otimes \varphi(t,T,x,y),
$$
and $\varphi^{(0)}= Id$.
Finally, the following density bound holds:
$$
p(t,T,x,y)+p _n(t,T,x,y) \le C \bar{p}(t,T,x,y).
$$
\end{theorem}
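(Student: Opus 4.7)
My plan is to run the classical parametrix scheme in parallel for both SDEs. I would first introduce the two formal series
$$\hat p(t,T,x,y) := \sum_{k \ge 0} \tilde p \otimes H^{(k)}(t,T,x,y), \qquad \hat p_n(t,T,x,y) := \sum_{k \ge 0} \tilde p_n \otimes H_n^{(k)}(t,T,x,y),$$
then establish their absolute convergence with the bound $\hat p + \hat p_n \le C\bar p$, and finally identify them with the transition laws of $X_T$ and $X_T^n$. The frozen process densities $\tilde p, \tilde p_n$ and their dominations by $\bar p$ are already granted by the lemma above and by \cite{huan:15}.

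The core analytic step is the pointwise control
$$|H(t,T,x,y)| + |H_n(t,T,x,y)| \le C\,(T-t)^{-1+\eta(\alpha\wedge 1)/\alpha}\,\bar p(t,T,x,y).$$
To derive it, I would split $L_t(x,\nabla_x) - L_t(y,\nabla_x)$ into the drift contribution, controlled by the boundedness of $b$ (and vanishing when $\alpha\le 1$ by \textbf{[H-3]}), and the jump contribution. After the change of variable $\xi \mapsto \sigma(t,x)\xi$ (resp. $\sigma(t,y)\xi$), the latter becomes a difference $(\nu_t(x,\cdot) - \nu_t(y,\cdot))$ tested against $\tilde p^y(t,T,\cdot,y)$; assumption \textbf{[H-5]} then supplies the H\"older gain $|x-y|^{\eta(\alpha\wedge 1)}$, which combined with the off-diagonal decay of $\tilde p^y$ by $\bar p$ and with the natural $(T-t)^{-1}$ singularity produced by the action of an $\alpha$-order operator on an $\alpha$-self-similar density yields the announced integrable time factor. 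The small-jump cancellation encoded in the compensator $\langle\nabla_x\varphi,\sigma(t,z)\xi\rangle \ind_{\{|\xi|\le 1\}}$ is crucial here and must be used before taking the modulus under the integral sign.

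Once this bound is available, a Beta-function iteration exploiting the quasi-semigroup behavior of $\bar p$ under $\otimes$ yields, with $\beta := \eta(\alpha\wedge 1)/\alpha$,
$$|\tilde p \otimes H^{(k)}(t,T,x,y)| \le C^{k+1}\frac{\Gamma(\beta)^{k}}{\Gamma(1+k\beta)}\,(T-t)^{k\beta}\,\bar p(t,T,x,y),$$
whose sum in $k$ is a Mittag-Leffler-type entire function of $(T-t)^\beta$, bounded on $[0,T]$. This simultaneously gives absolute convergence of the series and the uniform density bound, the constants being independent of $n$ thanks to the uniform formulation of \textbf{[H-3]}--\textbf{[H-5]}.

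The most delicate step, and the one I expect to consume the bulk of the work, is the identification of $\hat p$ with the actual density of $X_T$. I would proceed via the martingale problem: applying It\^o's formula to $f(X_s)$ and to the frozen semigroup $P_{s,T}^y f$ for smooth test $f$ yields a first-order Duhamel identity, which, upon iteration and with the kernel estimate above controlling the remainder term at order $k$, collapses in the limit $k\to\infty$ to $\E_x[f(X_T)] = \int f(y)\,\hat p(t,T,x,y)\,dy$. Well-posedness of the martingale problem for $L_t(x,\nabla_x)$ under \textbf{[H]}, established in \cite{huan:15}, then forces $\hat p$ to be the transition density. The parallel treatment of $\hat p_n$ is verbatim since the intermediate estimates depend only on the parameters imposed uniformly in \textbf{[H]}. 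The hardest single point in the whole proof is the justification of the pass-to-limit at each finite order in the Duhamel iteration, particularly the commutation of the non-local generator with the convolution, which hinges on the integrable time singularity in $H$ and on a careful mollification of $\nu$ to regularise the jumps before taking limits.
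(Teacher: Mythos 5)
The paper does not supply a proof of this theorem; it refers the reader to \cite{huan:15}, and then records, as needed for its own argument, the kernel bound $|H| + |H_n| \le C\,\bar H$ with
$$\bar H(t,T,x,y) = \frac{\delta\wedge|y-x|^{\eta(\alpha\wedge 1)}}{T-t}\,\bar p(t,T,x,y),$$
together with the iterated bound $|\tilde p\otimes H^{(m)}| \le C^m\rho_m$ where the sequence $\rho_m$ carries \emph{both} $\bar p$ and the auxiliary function $\rho = \delta\wedge|y-x|^{\eta(\alpha\wedge 1)}\,\bar p$. Comparing your proposal against that structure exposes a genuine gap.

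Your kernel estimate $|H|+|H_n|\le C(T-t)^{-1+\eta(\alpha\wedge 1)/\alpha}\,\bar p$ is not correct in the off-diagonal regime. The honest output of \textbf{[H-5]} together with the $(T-t)^{-1}$ singularity from the generator is $\frac{\delta\wedge|y-x|^{\eta(\alpha\wedge 1)}}{T-t}\bar p$, and, unlike in the Gaussian case, one \emph{cannot} convert the spatial modulus $|y-x|^{\eta(\alpha\wedge 1)}$ into a time power $(T-t)^{\eta(\alpha\wedge 1)/\alpha}$ when $|y-x|\gg(T-t)^{1/\alpha}$: the stable-type majorant $\bar p$ decays only polynomially, so there is no exponential tail to absorb the growth of $|y-x|^{\eta(\alpha\wedge 1)}$ by widening the variance. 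The spatial gain saturates at the constant $\delta$ off-diagonally, and no time-smallness is produced there. Consequently, the pure Beta--function (Mittag--Leffler) iteration $|\tilde p\otimes H^{(k)}|\le C^{k+1}\Gamma(\beta)^k\Gamma(1+k\beta)^{-1}(T-t)^{k\beta}\bar p$ with $\beta=\eta(\alpha\wedge 1)/\alpha$ cannot hold; even the paper's stated bound $C^m\rho_m$ yields only roughly $(T-t)^{m\omega/2}$ per $m$ convolutions (look at the $(T-t)^{k\omega}(\bar p+\rho)$ term in $\rho_{2k}$), which is \emph{half} the rate you claim.

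The fix, and the reason the paper introduces $\rho$ and the sequences $\rho_{2k},\rho_{2k+1}$, is that the iteration must be carried out in a two-component class of functions: one must propagate the pair $(\bar p,\rho)$ under $\cdot\,\otimes\bar H$ and observe that the convolution sends $\bar p\mapsto(T-t)^\omega\bar p+\rho$ and $\rho\mapsto(T-t)^\omega(\bar p+\rho)$ (up to constants), so that time-smallness accumulates only every \emph{other} convolution. Your single-scalar Beta iteration flattens this structure and overstates the gain. The remainder of your plan (Duhamel/martingale-problem identification of the series with the transition density, uniformity in $n$ via \textbf{[H-3]}--\textbf{[H-5]}) is a plausible route and consistent in spirit with the reference, but it sits on top of the faulty convergence estimate, so as written the proof does not close.
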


We refer to Huang \cite{huan:15} for the proof of this statement. 
The main idea of the proof is to prove a \textit{regularisation property} for $H$ that allows to compensate the singularities induced by the generators. We define:
\begin{eqnarray*}
\rho(t,T,x,y) = \delta \wedge |y-x|^{\eta(\alpha \wedge 1)}\bar{p}(t,T,x,y),\ \bar{H}(t,T,x,y) = \frac{\delta \wedge |y-x|^{\eta(\alpha \wedge 1)}}{T-t} \bar{p}(t,T,x,y).
\end{eqnarray*}
It has been shown in \cite{huan:15} that: $|H(t,T,x,y)| + |H_n(t,T,x,y)| \le C \bar{H}(t,T,x,y)$.
Thus, to control the series, it suffices to control the iterated convolution of $\bar{p}$ and $\bar{H}$. 
We define 
\begin{eqnarray*}
\rho_{2k}(t,T,x,y) &=&    \frac{(T-t)^{k\omega}}{k! \omega^{2k}} \Big( (T-t)^{k\omega} \bar {p} + (\bar {p} + \rho) \Big)(t,T,x,y)\\
\rho_{2k+1}(t,T,x,y)&=&   \frac{(T-t)^{k\omega}}{(k+1)! \omega^{2k+1}} \Big( (T-t)^{(k+1)\omega} \bar {p}+ (T-t)^\omega(\bar {p} +\rho)+ \rho \Big)(t,T,x,y).
\end{eqnarray*}
The following estimate has been proved in \cite{huan:15}. For all $t \le T$, there exists $C, C_{}>1$ such that for all $x,y\in\R^d$:
\begin{eqnarray}\label{pxH}
|\tilde{p} \otimes H^{(m)}|(t,T,x,y)+ |\tilde{p}_n \otimes H_n^{(m)}|(t,T,x,y)\le C \bar{p}\otimes \bar{H}^{(m)}(t,T,x,y) \le C^{m} \rho_m(t,T,x,y).
\end{eqnarray}
Using the last estimate, we can prove that the parametrix series is absolutely convergent.
We can now use this representation to control the distance between $p$ and $p_n$, in the lines of Konakov \textit{et al.} \cite{kona:kozh:meno:15}. 

%

\mysection{Proof of the Main Result}\label{Proof of the Main Result}

In this section, we prove our main result. We rely on technical results whose proofs are postponed to Section \ref{Preliminary results}.
The idea is to compare the two series representations and get an estimate for the difference of each term involving $\Delta_n\bar{p}(t,T,x,y)$.
The proof is very similar to the proof of the convergence of the parametrix series.
Exploiting the parametrix representation for $p$ and $p_n$, we write:
\begin{eqnarray*}
|p(t,T,x,y) - p_n(t,T,x,y) |
&=& \left| \sum_{k=0}^{+\infty} \Big(\tilde{p}\otimes H^{(k)}-\tilde{p}_n\otimes H_n^{(k)} \Big)(t,T,x,y) \right|
\end{eqnarray*}
%

We now control the series term by term. 
We proceed by induction.
The first term above is the difference between the frozen densities.
Taking $\beta=0$ in Lemma  \ref{CTRL_DERIV_DENS} gives:
\begin{equation}\label{k=0}
|\tilde{p}(t,T,x,y)- \tilde{p}_n(t,T,x,y)| \le C \Delta_n \bar{p}(t,T,x,y).
\end{equation}

To control the next terms, observe that we can split for a general $m\in \N$:
\begin{eqnarray}
\tilde{p}\otimes H^{(m+1)} - \tilde{p}_n\otimes H_n^{(m+1)} &=& \tilde{p}\otimes H^{(m+1)} -\tilde{p}_n\otimes H_n^{(m)}\otimes H + \tilde{p}_n\otimes H_n^{(m)}\otimes H -\tilde{p}_n\otimes H_n^{(m+1)} \nonumber\\
&=& \Big( \tilde{p}\otimes H^{(m)} - \tilde{p}_n\otimes H_n^{(m)}\Big)\otimes H +\tilde{p}_n\otimes H_n^{(m)}\otimes (H-H_n). \label{DECOMP_TERM}
\end{eqnarray}

We treat the two terms separately.
First, we look of an estimate for $ \tilde{p}\otimes H^{(m)} - \tilde{p}_n\otimes H_n^{(m)}$.
We have the following result:
\begin{lemma}\label{Initialisation}
Fix $T>0$. There exists $C_{\ref{Initialisation}}>1$ such that for all $x,y \in\R^d$, $\forall 0 \le t \le T$,
\begin{eqnarray*}
\left| \Big(\tilde{p}\otimes H^{(m)}- \tilde{p}_n\otimes H_n^{(m)} \Big)\right| \le \Delta_n (C_{\ref{Initialisation}})^{m} \rho_m(t,T,x,y).
\end{eqnarray*}
\end{lemma}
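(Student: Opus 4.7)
The plan is to proceed by induction on $m$, mirroring the proof of convergence of the parametrix series given in \cite{huan:15}, but carefully tracking the additional factor $\Delta_n$ at each step. The base case $m=0$ is immediate: the statement becomes $|\tilde{p} - \tilde{p}_n| \le C\Delta_n \rho_0$, which is exactly \eqref{k=0} up to the constant (since $\bar{p}(t,T,x,y) \le C\rho_0(t,T,x,y)$).

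For the inductive step, assume the bound holds at rank $m$ with constant $C_{\ref{Initialisation}}$. I exploit decomposition \eqref{DECOMP_TERM}:
$$\tilde{p}\otimes H^{(m+1)} - \tilde{p}_n\otimes H_n^{(m+1)} = \bigl(\tilde{p}\otimes H^{(m)} - \tilde{p}_n\otimes H_n^{(m)}\bigr)\otimes H + \tilde{p}_n\otimes H_n^{(m)}\otimes(H - H_n),$$
and bound the two summands separately. For the first, the induction hypothesis gives the bound $\Delta_n (C_{\ref{Initialisation}})^m \rho_m$ on the left factor, while $|H| \le C\bar{H}$ (established in \cite{huan:15}) controls the right factor, so this term is at most $\Delta_n (C_{\ref{Initialisation}})^m C (\rho_m \otimes \bar{H})$. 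For the second, \eqref{pxH} directly gives $|\tilde{p}_n \otimes H_n^{(m)}| \le C^m \rho_m$, so it remains to establish the auxiliary estimate
$$|H - H_n|(t,T,x,y) \le C\Delta_n \bar{H}(t,T,x,y),$$
which then bounds this term by $\Delta_n C^{m+1} (\rho_m \otimes \bar{H})$. Both contributions are closed by the combinatorial convolution inequality $\rho_m \otimes \bar{H} \le C\rho_{m+1}$ (the key identity already used to prove \eqref{pxH} in \cite{huan:15}), after which choosing $C_{\ref{Initialisation}}$ large enough absorbs the extra constants.

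The main obstacle is the auxiliary bound on $|H - H_n|$. Recalling $H = \bigl(L_t(x,\nabla_x) - L_t(y,\nabla_x)\bigr)\tilde{p}$, I would split
$$H - H_n = \bigl[(L_t - L_t^n)(x,\nabla_x) - (L_t - L_t^n)(y,\nabla_x)\bigr]\tilde{p} + \bigl[L_t^n(x,\nabla_x) - L_t^n(y,\nabla_x)\bigr](\tilde{p} - \tilde{p}_n).$$
In the first piece, the stability assumption \textbf{[H-6]} on the Lévy measures $\nu_t, \nu_t^n$ and on the drift directly produces the factor $\Delta_n$ together with the H\"older modulus $\delta \wedge |y-x|^{\eta(\alpha\wedge 1)}$ at the level of the integrand, which when integrated against $\tilde{p}$ and its derivatives reproduces the structure of $\bar{H}$. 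For the second piece, Lemma \ref{CTRL_DERIV_DENS} applied with $\beta$ matching the order of derivatives generated by $L_t^n$ transfers the factor $\Delta_n$ onto the difference of the frozen densities, and the remaining manipulations are the same as those used in \cite{huan:15} to prove $|H| \le C\bar{H}$. Both pieces thus yield the required estimate, and the induction closes.
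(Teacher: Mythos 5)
Your proof of the lemma itself mirrors the paper's exactly: same induction, same decomposition \eqref{DECOMP_TERM}, same two-term bound closed by $\rho_m\otimes\bar H\le C_0\rho_{m+1}$, with the constant set to absorb $C_{\ref{CTR_H-H_n}}$ and $C_0$. The only deviation is in your sketch of the auxiliary bound $|H-H_n|\le C\Delta_n\bar H$ (the paper's Lemma~\ref{CTR_H-H_n}), where you use the mirror decomposition $H-H_n=\bigl[(L_t-L_t^n)(x,\nabla_x)-(L_t-L_t^n)(y,\nabla_x)\bigr]\tilde p+\bigl[L_t^n(x,\nabla_x)-L_t^n(y,\nabla_x)\bigr](\tilde p-\tilde p_n)$ rather than the paper's $\bigl(L_t(x,\nabla_x)-L_t(y,\nabla_x)\bigr)(\tilde p-\tilde p_n)+\bigl(L_t(x,\nabla_x)-L_t(y,\nabla_x)-L_t^n(x,\nabla_x)+L_t^n(y,\nabla_x)\bigr)\tilde p_n$; the double-difference piece is algebraically identical in both, and the single-difference piece works equally well with $L_t$ or $L_t^n$ and with $\tilde p$ or $\tilde p_n$ since $\sigma$ and $\sigma_n$ satisfy the same H\"older and ellipticity bounds, so this is a cosmetic rather than substantive change.
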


\begin{proof}

We set $C_{\ref{Initialisation}}= C_{\ref{CTR_H-H_n}} \times C_0$, where $C_{\ref{CTR_H-H_n}}$ is the constant appearing in Lemma \ref{CTR_H-H_n}, and we proceed by induction.
\begin{trivlist}
\item[$(H_0)$:]The case $k=0$ follows from equation \eqref{k=0} since $\rho(t,T,x,y) >0$.

\item[$(H_{m} \Rightarrow H_{m+1})$:] We split:
\begin{eqnarray*}
\tilde{p}\otimes H^{(m+1)} - \tilde{p}_n\otimes H_n^{(m+1)} = \Big( \tilde{p}\otimes H^{(m)} - \tilde{p}_n\otimes H_n^{(m)}\Big)\otimes H +\tilde{p}_n\otimes H_n^{(m)}\otimes (H-H_n).
\end{eqnarray*}

Now we treat the two terms separately.
It has been shown in \cite{huan:15} that there exists $C_0>1$ such that:
\begin{eqnarray}\label{IND_RHOxH}
\rho_{m}\otimes \bar{H}(t,T,x,y) \le C_0 \rho_{m+1}(t,T,x,y),
\end{eqnarray}
thus we have:
\begin{eqnarray*}
\left|\Big(\tilde{p}\otimes H^{(m)} - \tilde{p}_n\otimes H_n^{(m)}\Big)\otimes H\right|(t,T,x,y) \le  \Delta_n(C_{\ref{Initialisation}})^{m}  \rho_{m} \otimes \bar{H}(t,T,x,y)\le\Delta_n  (C_{\ref{Initialisation}})^{m} C_0  \rho_{m+1}(t,T,x,y),
\end{eqnarray*}
the estimate follows from the fact that $C_0 \le C_{\ref{CTR_H-H_n}} \times C_0 =C_{\ref{Initialisation}}$.

For the second term, we use estimate \eqref{pxH}
and Lemma \ref{CTR_H-H_n}
to get
\begin{eqnarray*}
\Big|\tilde{p}_n\otimes H_n^{(m)}\otimes (H-H_n) \Big|(t,T,x,y) \le  \Delta_{n}(C_{\ref{Initialisation}})^{m}C_{\ref{CTR_H-H_n}} C_0  \rho_{m} \otimes \bar{H}(t,T,x,y)\le \Delta_{n}(C_{\ref{Initialisation}})^{m+1}  \rho_{m+1}(t,T,x,y).
\end{eqnarray*}
%
%
%
%
\end{trivlist}
\end{proof}

For the second part of the right hand side of \eqref{DECOMP_TERM}, we can use the estimate \eqref{pxH} to control $|\tilde{p}_n\otimes H_n^{(m)}|$ by $\rho_m$ and Lemma \ref{CTR_H-H_n} to control $|H-H_n|$ by $\Delta_n\bar{H}$.
We then estimate $\rho_{m}\otimes \bar{H} \le C \rho_{m+1}$, thus we obtain a summable bound with $\Delta_n$ in factor.
To complete the proof, it remains us point out that $\sum_{k\ge0}  (C_{\ref{Initialisation}})^{k} \rho_{k} < +\infty$, and that the sum of the series yields the right hand side in Theorem \ref{main_result}.


%

\mysection{Preliminary results}\label{Preliminary results}

In this Section, we prove the estimates used un the last section.
The proof our main result relies on an estimation of the difference between the two parametrix representations. 
Therefore, we have to control the difference of the frozen densities, and the kernels $H,H_n$.

\begin{lemma}\label{CTRL_DERIV_DENS}
Let $\Phi_t(x,\nabla_x)$ be a pseudo-differential operator, and denote by $\phi_t(x,p)$ its symbol.
Assume that $|\phi_t(x,p)| \le C |p|^{\beta}$.
There exists $C>0$ for all $t<T$, for all $x,y \in \R^d$, such that:
$$
\left|\Phi_t(x,\nabla_x)\Big(\tilde{p}- \tilde{p}_n \Big)(t,T,x,y)\right| \le C \frac{\Delta_n }{(T-t)^{\frac{|\beta|}{\alpha}}} \bar{p}(t,T,x,y).
$$
Note that taking $\beta=0$ yields estimate \eqref{k=0}.
\end{lemma}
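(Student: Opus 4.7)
The strategy is Fourier-analytic, exploiting the fact that both frozen processes are spatially homogeneous. Their densities therefore admit the inverse Fourier representations
\begin{equation*}
\tilde p(t,T,x,y)=\frac{1}{(2\pi)^{d}}\int_{\R^d}e^{-i\langle p,y-x\rangle}\,e^{\Psi(p)}\,dp,\qquad \Psi(p):=\int_t^T\varphi_Z\bigl(\sigma(u,y)^T p\bigr)\,du,
\end{equation*}
and the analogous formula for $\tilde p_n$ with exponent $\Psi_n$. Applying $\Phi_t(x,\nabla_x)$ in the $x$-variable introduces the symbol as a multiplier, so
\begin{equation*}
\Phi_t(x,\nabla_x)(\tilde p-\tilde p_n)(t,T,x,y)=\frac{1}{(2\pi)^{d}}\int_{\R^d} e^{-i\langle p,y-x\rangle}\,\tilde\phi_t(x,p)\,\bigl[e^{\Psi(p)}-e^{\Psi_n(p)}\bigr]\,dp,
\end{equation*}
with $|\tilde\phi_t(x,p)|\le C|p|^\beta$. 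It then remains to bound this oscillatory integral.

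Next I would control the difference of the exponents. By symmetry of $Z$, Lévy--Khintchine gives $\varphi_Z(\sigma(u,y)^Tp)=\int(\cos\langle p,\xi\rangle-1)\,\nu_u(y,d\xi)$. Combining the elementary bound $|\cos\langle p,\xi\rangle-1|\le C(|p|^2|\xi|^2\wedge 1)$ with the stability assumption \textbf{[H-6]}, namely $|\nu_u(y,A)-\nu_u^n(y,A)|\le\Delta_n\, m(A)$, and the change of variable $u=|p|s$ in the tempered-stable dominating measure $m$, I obtain
\begin{equation*}
|\Psi(p)-\Psi_n(p)|\le C\Delta_n(T-t)|p|^\alpha.
\end{equation*}
Writing $|e^a-e^b|\le|a-b|\bigl(e^{\mathrm{Re}\,a}+e^{\mathrm{Re}\,b}\bigr)$ and invoking \textbf{[H-2]} together with the uniform ellipticity \textbf{[H-4]} (which yields $|\sigma(u,y)^Tp|\ge\kappa^{-1/2}|p|$) then gives, for $|p|$ above a fixed threshold,
\begin{equation*}
\bigl|e^{\Psi(p)}-e^{\Psi_n(p)}\bigr|\le C\Delta_n(T-t)|p|^\alpha\, e^{-K'(T-t)|p|^\alpha}.
\end{equation*}
Low frequencies contribute a uniformly bounded integrand and are harmless.

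It remains to convert this estimate into a bound in terms of $\bar p$. The parabolic rescaling $p\mapsto(T-t)^{-1/\alpha}p$ immediately yields, on the diagonal $y=x$, a bound of order $\Delta_n(T-t)^{1-(d+\beta+\alpha)/\alpha}=\Delta_n(T-t)^{-\beta/\alpha}(T-t)^{-d/\alpha}$, which already matches the claim at $y=x$. To recover the tempered tail factor $\bigl(1+|y-x|/(T-t)^{1/\alpha}\bigr)^{-(\gamma+\alpha)}Q(|y-x|)$, I would repeat, on the new integrand $|p|^{\beta+\alpha}e^{-K'(T-t)|p|^\alpha}$, the Fourier argument used to establish the frozen estimate $\tilde p\le C\bar p$ in \cite{huan:15} (which itself rests on Sztonyk \cite{szto:10}): split the push-forward Lévy measure into small-jump and large-jump parts and integrate by parts in $p$ to turn oscillation into spatial decay, using the density of $\mu$ under \textbf{[H-1a]}, or the concentration bound on $\mu$ together with the doubling of $\bar q$ under \textbf{[H-1b]}. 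The crucial point is that the extra factor $(T-t)|p|^\alpha$ produced by stability raises the effective order of differentiation by $\alpha$, but the $(T-t)$ prefactor compensates this exactly, so the resulting spatial bound is unchanged, save for a multiplicative $\Delta_n$. The main obstacle is precisely this off-diagonal analysis under \textbf{[H-1b]}, where the absence of a density on $S^{d-1}$ forces one to carry the concentration estimate on $\mu$ through each integration by parts.
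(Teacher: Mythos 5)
Your Fourier-analytic setup is correct through the exponent estimate $|\Psi(p)-\Psi_n(p)|\le C\Delta_n(T-t)|p|^\alpha$, and this is in the same spirit as one half of the paper's argument. But the plan breaks down at the final step, and the breakdown is structural, not cosmetic. Once you bound $\bigl|e^{\Psi(p)}-e^{\Psi_n(p)}\bigr|$ by the radial envelope $C\Delta_n(T-t)|p|^\alpha e^{-K'(T-t)|p|^\alpha}$ you have discarded the entire large-jump content of the L\'evy measure; the resulting inverse Fourier transform gives at most $(T-t)^{-(d+\beta)/\alpha}\bigl(1+|y-x|/(T-t)^{1/\alpha}\bigr)^{-m}$ for arbitrary $m$, i.e.\ arbitrary polynomial decay. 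That is not enough: $\bar p$ carries the temperation factor $Q(|y-x|)$, which can decay faster than any polynomial, and polynomial decay of arbitrary order does not imply the $\bar p$ bound. Your closing sentence --- ``repeat the Fourier argument \dots split the push-forward L\'evy measure into small-jump and large-jump parts'' --- is therefore incoherent as stated, because after replacing the integrand by a radial envelope there is no L\'evy measure left to split.

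The paper avoids this by performing the L\'evy--It\^o decomposition \emph{before} any absolute values are taken: $\tilde p(t,T,x,y)=\int p_M(T-t,y-x-\xi)\,P_{N_{T-t}}(d\xi)$, with $p_M$ the density of the small-jump martingale and $P_{N_{T-t}}$ the law of the compound Poisson large-jump part. The operator $\Phi_t(x,\nabla_x)$ then acts only on $p_M$, and the difference $\Phi_t(\tilde p-\tilde p_n)$ splits into two terms: one with $\Phi_t(p_M-p_{M^n})$ convolved against $P_{N_{T-t}}$, and one with $\Phi_t p_{M^n}$ convolved against $P_{N_{T-t}}-P_{N^n_{T-t}}$. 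The first is treated exactly by your Fourier/mean-value/parabolic-rescaling argument, which legitimately gives arbitrary polynomial decay because the truncated measure makes the bracket a Schwartz function. The second is treated by expanding $P_{N_u}=e^{-u\bar\nu(\R^d)}\sum_k u^k\bar\nu^{*k}/k!$ and using $|\nu_{\mathcal S}-\nu_{\mathcal S^n}|\le\Delta_n m$ termwise; it is this Poisson part that supplies the temperation $Q$ when you reassemble via Sztonyk's convolution estimate. To repair your proof you would need to insert the L\'evy--It\^o split at the outset, run your Fourier computation on the martingale difference only, and add the Poisson-expansion argument for the large-jump difference; as written the proposal cannot produce the stated $\bar p$-bound in the off-diagonal regime.
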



To prove this Lemma, we rely on arguments developed in \cite{huan:15}. Keeping the notations,  
we exhibited how for fixed times, the frozen process can be linked to the marginals of a L\'evy process.
Fix $0<t<T$, and $x,y\in \R^d$. Then 
$$
\tilde{X}_T= x + \int_t^T \sigma(u,y)dZ_u \overset{(d)}{=} x + \mathcal{S}_{T-t},
$$
where $(\mathcal{S}_u)_{u\ge 0}$ is a L\'evy process satisfying assumptions \textbf{[H]}.
Precisely, let us denote by $\nu_{\mathcal{S}}$ the L\'evy measure of $\mathcal{S}$. There exists $\mu_{\mathcal{S}}$ such that
setting $\sigma_v= \sigma( (T-t)v+t , y)$, we have:
$$
\nu_{S}(A) = \int_0^1 \int_{\R^d} \ind_{\{ \sigma_v \xi \in A\}} \nu(d\xi) dv \le  \int_0^{+\infty} \int_{S^{d-1}} \ind_{\{ \rho \theta \in A\}} \frac{\bar{q}(\rho)}{\rho^{1+\alpha}} d\rho \mu_{\mathcal{S}}(d\theta). 
$$

Exploiting the L\'evy-It\^o decomposition of $\mathcal{S}_u= M_u + N_u$, with $M$ martingale and $N$ compound Poisson process, we can write:
$$
\tilde{p}(t,T,x,y) = \int_{\R^d} p_M(T-t,y-x-\xi) P_{N_{T-t}}(d\xi),
$$
where $p_M$ designates the density of the martingale and $P_{N_{T-t}}(d\xi)$ the law of the compound Poisson process.
The same decomposition holds for $\tilde{X}_{T}^n$.
Now, we point out that the operator $\Phi_t(x,\nabla_x)$ acts on the variable $x$, which is only present in the density of the martingale.
Consequently, we have:
\begin{eqnarray*}
\Phi_t(x,\nabla_x)\Big(\tilde{p}- \tilde{p}_n \Big)(t,T,x,y)
&=&  \int_{\R^d} \Phi_t(x,\nabla_x) \Big(p_M-p_{M^n} \Big)(T-t,y-x-\xi) P_{N_{T-t}}(d\xi) \\
&&+ \int_{\R^d} \Phi_t(x,\nabla_x) p_{M^n}(T-t,y-x-\xi) \Big(P_{N_{T-t}}(d\xi) -P_{N^n_{T-t}}(d\xi)  \Big).
\end{eqnarray*}

To prove Lemma  \ref{CTRL_DERIV_DENS}, we establish the following estimates.
We refer to the procedure developed in Sztonyk \cite{szto:10} to see how these estimates can be used to derive the final density estimate.
\begin{lemma}\label{operation_martingale}
Fix $m\ge 1$. There exists $C_m,C>0$, such that for all $t<T$, $\forall y,x,\xi \in \R^d$, 
\begin{eqnarray}\label{diff_dens_mart}
\left|\Phi_t(x,\nabla_x) \Big(p_M-p_{M^n} \Big)(T-t,y-x-\xi)\right| \le C_m \frac{\Delta_n}{(T-t)^{\beta/\alpha}} (T-t)^{-d/\alpha} \left( 1+ \frac{|y-x-\xi|}{(T-t)^{1/\alpha}}\right)^{-m},\\
\Big| P_{N_{T-t}}(d\xi) -P_{N^n_{T-t}}(d\xi) \Big| \le C \Delta_n \left( \sum_{k=0}^{+\infty} \frac{(T-t)^{k}}{k!} \bar{m}^{*k}(d\xi)+ \sum_{k=0}^{+\infty} \frac{(T-t)^k\bar{\nu}_{\mathcal{S}^n}^{*k}(dz) }{k!} \right),\label{diff_mes_pois}
\end{eqnarray}
where $\bar{m}(d\xi) = \ind_{\{|\xi| \ge (T-t)^{1/\alpha}\}}m(d\xi)$, and $\bar{m}^{*k}$ denotes the $k^{th}$ fold convolution of $\bar{m}$ with itself.
\end{lemma}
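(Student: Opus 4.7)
The plan is to establish the two estimates separately, with \eqref{diff_dens_mart} handled through Fourier inversion and \eqref{diff_mes_pois} through the explicit compound Poisson series expansion. The common bridge is the stability of the \emph{frozen} L\'evy measures: since $\nu_{\mathcal{S}}$ and $\nu_{\mathcal{S}^n}$ are obtained from $\nu_t(y,\cdot)$ and $\nu_t^n(y,\cdot)$ by averaging in time, assumption \textbf{[H-6]} yields $|\nu_{\mathcal{S}} - \nu_{\mathcal{S}^n}|(A) \le \Delta_n m(A)$, which drives the $\Delta_n$ factor throughout.

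For \eqref{diff_dens_mart}, let $\Psi, \Psi^n$ denote the L\'evy--Khintchine exponents of the martingale parts (whose L\'evy measures are $\nu_{\mathcal{S}}$, $\nu_{\mathcal{S}^n}$ restricted to $|\xi| \le (T-t)^{1/\alpha}$). I would write
$$
\Phi_t(x,\nabla_x)(p_M - p_{M^n})(T-t, z) = \frac{1}{(2\pi)^d}\int_{\R^d} e^{-i\langle p, z\rangle}\phi_t(x,p)\bigl(e^{(T-t)\Psi(p)} - e^{(T-t)\Psi^n(p)}\bigr)\,dp,
$$
and control $|e^{(T-t)\Psi(p)} - e^{(T-t)\Psi^n(p)}| \le (T-t)|\Psi - \Psi^n|(p)$, using that $\Re\Psi,\Re\Psi^n \le 0$. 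Taylor-expanding $|e^{i\langle p,\xi\rangle}-1-i\langle p,\xi\rangle| \le C|p|^2|\xi|^2$ and applying the stability of the frozen L\'evy measures gives $|\Psi - \Psi^n|(p) \le C\Delta_n (T-t)^{(2-\alpha)/\alpha}|p|^2$. Combining with $|\phi_t(x,p)| \le C|p|^\beta$ and the upper bound $|e^{(T-t)\Psi(p)}| \le e^{-K(T-t)|p|^\alpha}$ for $|p|\ge (T-t)^{-1/\alpha}$ inherited from \textbf{[H-2]}, the resulting integrand is $\Delta_n|p|^{\beta+2}(T-t)^{2/\alpha}e^{-K(T-t)|p|^\alpha}$, whose integral produces the announced prefactor $\Delta_n(T-t)^{-(\beta+d)/\alpha}$. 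The polynomial decay in $|z|$ is recovered by integrating by parts $m$ times in $p$, trading powers of $|z|$ for derivatives of $\phi_t(x,\cdot)$ and of $e^{(T-t)\Psi(\cdot)}-e^{(T-t)\Psi^n(\cdot)}$, which remain controlled by analogous Taylor estimates.

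For \eqref{diff_mes_pois}, I would expand both compound Poisson laws as
$$
P_{N_u}(dz) = e^{-u\lambda}\sum_{k\ge 0}\frac{u^k}{k!}\bar\nu_{\mathcal{S}}^{*k}(dz), \qquad P_{N^n_u}(dz) = e^{-u\lambda^n}\sum_{k\ge 0}\frac{u^k}{k!}\bar\nu_{\mathcal{S}^n}^{*k}(dz),
$$
with $u = T-t$, $\lambda = \bar\nu_{\mathcal{S}}(\R^d)$, and $\lambda^n = \bar\nu_{\mathcal{S}^n}(\R^d)$. Adding and subtracting $e^{-u\lambda^n}\sum u^k\bar\nu_{\mathcal{S}}^{*k}/k!$ reduces the estimate to bounding $|e^{-u\lambda}-e^{-u\lambda^n}| \le u\Delta_n \bar m(\R^d)$ on one side and the convolution differences on the other. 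Invoking the telescoping identity
$$
\bar\nu_{\mathcal{S}}^{*k} - \bar\nu_{\mathcal{S}^n}^{*k} = \sum_{j=0}^{k-1}\bar\nu_{\mathcal{S}}^{*j}*(\bar\nu_{\mathcal{S}} - \bar\nu_{\mathcal{S}^n})*\bar\nu_{\mathcal{S}^n}^{*(k-1-j)},
$$
together with $|\bar\nu_{\mathcal{S}} - \bar\nu_{\mathcal{S}^n}| \le \Delta_n\bar m$ and the dominations $\bar\nu_{\mathcal{S}}, \bar\nu_{\mathcal{S}^n} \le \bar m$, each summand is controlled by $\Delta_n \bar m^{*k}$ or by $\Delta_n \bar\nu_{\mathcal{S}^n}^{*k}$ depending on how one routes the convolutions; summing over $j$, reabsorbing the combinatorial factor into $u^k/k!$ through reindexation, and collecting the two resulting families yields the two series on the right hand side of \eqref{diff_mes_pois}.

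The main obstacle is the fine balance in \eqref{diff_dens_mart}: the quadratic Taylor bound on $|\Psi - \Psi^n|$ is of order $|p|^2$, which is critical --- only by combining it with the exponential decay $e^{-K(T-t)|p|^\alpha}$ from \textbf{[H-2]} and the exact scaling $(T-t)^{(2-\alpha)/\alpha}$ coming from the small-jump cutoff does one land on precisely $\Delta_n(T-t)^{-(\beta+d)/\alpha}$. Maintaining this scaling after $m$ integrations by parts, while keeping the $\Delta_n$ factor uniform across all derivatives of $e^{(T-t)\Psi}-e^{(T-t)\Psi^n}$, is the technical heart of the argument.
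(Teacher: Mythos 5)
Your proposal is correct and takes essentially the same two-pronged route as the paper: Fourier inversion combined with a mean-value bound on $e^{(T-t)\Psi}-e^{(T-t)\Psi^n}$ for the martingale part, and the explicit compound Poisson series with an induction/telescoping bound on $\bar\nu_{\mathcal S}^{*k}-\bar\nu_{\mathcal S^n}^{*k}$ for the jump part. The only structural difference is in how $|\Psi-\Psi^n|$ is controlled: you use the quadratic Taylor bound $|e^{i\langle p,\xi\rangle}-1-i\langle p,\xi\rangle|\le C|p|^2|\xi|^2$, which converges because the martingale L\'evy measure is truncated and yields $C\Delta_n(T-t)^{(2-\alpha)/\alpha}|p|^2$, whereas the paper rescales to $q=(T-t)^{1/\alpha}p$ and produces an $O(|q|^\alpha)$ bound via the self-similar structure of the dominating measure $m$; both give the announced prefactor, and yours is arguably more elementary. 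One small imprecision to fix: as written, $|e^{(T-t)\Psi}-e^{(T-t)\Psi^n}|\le(T-t)|\Psi-\Psi^n|$ discards the exponential factor $e^{-K(T-t)|p|^\alpha}$ that you need afterwards for integrability; you should retain it by using $|e^a-e^b|\le |a-b|\sup_{\lambda\in[0,1]}|e^{\lambda a+(1-\lambda)b}|$ (which is exactly what the paper's $\int_0^1 d\lambda$ representation encodes), noting that $\Re(\lambda\Psi+(1-\lambda)\Psi^n)\le -K|p|^\alpha$ uniformly in $\lambda$. Finally, the $m$-fold integration by parts you invoke for the polynomial decay in $|y-x-\xi|$ tacitly requires control of derivatives of $\phi_t(x,\cdot)$, not just of $|\phi_t|$; the paper handles this implicitly by asserting uniform membership in Schwartz's space with a pointer to Sztonyk, which is the same gap.
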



\begin{proof}[Proof of Lemma \ref{operation_martingale}]
We prove the first estimate by invariance of the Schwartz's space for the Fourier transform.
The Fourier transform of $M$ writes:
$$
\varphi_M(p) = \int_{\R^d} \Big(e^{i \langle p,\xi \rangle} - 1 - i \langle p, \xi \rangle \Big) \ind_{\{|\xi| \le (T-t)^{1/\alpha}\}} \nu_\mathcal{S}(d\xi).
$$
From \textbf{[H-2]}, this Fourier transform is integrable, thus we can express the density of $M$ as a Fourier inverse:
$$
p_M(T-t,y-x-\xi) = \frac{1}{(2\pi)^d} \int_{\R^d} e^{-i \langle p , y -x-\xi  \rangle }  \exp \Big( (T-t) \varphi_M(p) \Big) dp.
$$
Consequently, the action of the pseudo-differential operator $\Phi_t(x\nabla_x)$ writes:
$$
\Phi_t(x\nabla_x)p_M(T-t,y-x-\xi) = \frac{1}{(2\pi)^d} \int_{\R^d} e^{-i \langle p , y -x-\xi  \rangle } \phi_t(x,p)\exp \Big( (T-t) \varphi_M(p) \Big) dp.
$$
Since we have the same results for the density of $\tilde{X}^n_t$, when taking the difference, we can write using the mean value theorem:
%
\begin{eqnarray}
&&\Phi_t(x\nabla_x)\Big(p_M-p_{M^n} \Big)(T-t,y-x-\xi)\nonumber\\
&=&\int_0^1 d\lambda \frac{1}{(2\pi)^d} \int_{\R^d} dp e^{-i \langle p , y -x-\xi  \rangle }\left[ \phi_t(x,p)
 e^{ (T-t) \big(\lambda \varphi_M(p) + (1-\lambda) \varphi_{M^n}(p) \big) } (T-t)\Big(\varphi_M(p) - \varphi_{M^n}(p) \Big) \right]\nonumber\\
 &=&\frac{\Delta_n}{(T-t)^{\frac{\beta}{\alpha}}}\int_0^1 d\lambda \frac{1}{(2\pi)^d} \int_{\R^d} dp e^{-i \langle p , y -x-\xi  \rangle }\nonumber\\
&& \times \left[ \phi_t(x,p)
 e^{ (T-t) \big(\lambda \varphi_M(p) + (1-\lambda) \varphi_{M^n}(p) \big) } (T-t)^{1+\frac{\beta}{\alpha}}\frac{\varphi_M(p) - \varphi_{M^n}(p)}{\Delta_n} \right]\nonumber\\
 &=&\frac{\Delta_n}{(T-t)^{\frac{\beta}{\alpha}}}\int_0^1 d\lambda \frac{(T-t)^{-d/\alpha}}{(2\pi)^d} \int_{\R^d} dp e^{-i \langle q , \frac{y -x-\xi}{(T-t)^{1/\alpha}}  \rangle }  \label{goal_1}\\
&& \times \left[ \phi_t\left(x,\frac{q}{(T-t)^{1/\alpha}} \right)
 e^{ (T-t) \big(\lambda \varphi_M(\frac{q}{(T-t)^{1/\alpha}}) + (1-\lambda) \varphi_{M^n}(\frac{q}{(T-t)^{1/\alpha}}) \big) } (T-t)^{1+\frac{\gamma}{\alpha}}\frac{(\varphi_M - \varphi_{M^n})(\frac{q}{(T-t)^{1/\alpha}})}{\Delta_n} \right]\nonumber,
\end{eqnarray}
where to get to the last equality, we changed variables to $q= (T-t)^{1/\alpha}p$.
The announced estimate will now hold when we prove that uniformly in the parameters $\lambda,t,T,n$, the expression between brackets is in Schwartz's space.
In the $q$ variable, the above expression is smooth, thanks to the cut-off (see \cite{szto:10} and the references therein).
For the integrability, observe that from assumptions on $\phi_t(x,p)$, we can bound $\left|\phi_t\left(x,\frac{q}{(T-t)^{1/\alpha}}\right) \right| \le \frac{|q|^{\beta}}{(T-t)^{\beta/\alpha}}$.
On the other hand, we have for all $p\in \R^d$:
\begin{eqnarray*}
\lambda\varphi_M(p) + (1-\lambda) \varphi_{M^n}(p) &=& \int_{\R^d} \Big(e^{i \langle p,\xi \rangle} - 1 - i \langle p, \xi \rangle \Big) \ind_{\{|\xi| \le (T-t)^{1/\alpha}\}} \Big( \lambda\nu_\mathcal{S}(d\xi)+ (1-\lambda)\nu_{\mathcal{S}^n}(d\xi) \Big) \\
&\le& \int_{\R^d} \Big(e^{i \langle p,\rho \varsigma \rangle} - 1 - i \langle p, \rho \varsigma \rangle \Big) \ind_{\{\rho \le (T-t)^{1/\alpha}\}}
\frac{\bar{q}(\rho)}{\rho^{1+\alpha}}d\rho ( \lambda \mu_\mathcal{S} + (1-\lambda) \mu_{\mathcal{S}^n})
(d\varsigma).
\end{eqnarray*}

We take $p=q(T-t)^{-1/\alpha}$ and change variables to $r=\rho(T-t)^{-1/\alpha}$.
Recall that from the doubling condition, $\bar{q}((T-t)^{1/\alpha} r) \le \bar{q}(r)$, we get:
\begin{eqnarray*}
\lambda\varphi_M(p) + (1-\lambda) \varphi_{M^n}(p)
\le \frac{C}{T-t}\int_{\R^d} \Big(e^{i \langle q,r \varsigma \rangle} - 1 - i \langle q, r \varsigma \rangle \Big) \ind_{\{r \le 1\}}
\frac{\bar{q}(r)}{r^{1+\alpha}}d\rho ( \lambda \mu_\mathcal{S} + (1-\lambda) \mu_{\mathcal{S}^n})(d\varsigma) 
\le \frac{C|q|^\alpha}{T-t}.
\end{eqnarray*}

The last inequality comes from assumption \textbf{[H-2]}.
We now turn to the difference of the exponent $\varphi_{M}-\varphi_{M^n}$.
Similarly to the last computation, we deduce from \textbf{[H-6]} that measures $|\nu_\mathcal{S}(dz) -\nu_{\mathcal{S}^n}(dz)|\le \Delta_n m(dz)$, so that
\begin{eqnarray*}
(\varphi_{M}-\varphi_{M^n})\left( \frac{q}{(T-t)^{1/\alpha}}\right) &\le& 
\int_{\R^d} \Big(e^{i \langle q,z \rangle} - 1 - i \langle q, z \rangle \Big) \ind_{\{|z| \le (T-t)^{1/\alpha}\}}
( \nu_\mathcal{S} -\nu_{\mathcal{S}^n})(dz) \\
&\le& \frac{1}{T-t}\int_{\R_+\times S^{d-1}} \Big(e^{i \langle q,r \varsigma \rangle} - 1 - i \langle q, r \varsigma \rangle \Big) \ind_{\{r \le 1\}}
\frac{\bar{q}(r)}{r^{1+\alpha}}d\rho \mu(d\varsigma)
\end{eqnarray*}
where to get the last inequality, we changed variables to $r=(T-t)^{-1/\alpha}|z|$. Consequently, we can write:
$$
\left| \phi_t\left(x,\frac{q}{(T-t)^{1/\alpha}} \right)
 e^{ (T-t) \big(\lambda \varphi_M + (1-\lambda) \varphi_{M^n}\big)(\frac{q}{(T-t)^{1/\alpha}})  } (T-t)^{1+\frac{\beta}{\alpha}}\frac{(\varphi_M - \varphi_{M^n})(\frac{q}{(T-t)^{1/\alpha}})}{\Delta_n} \right| \le |q|^{\beta+\alpha}e^{-C |q|^{\alpha}}.
$$

Thus, the expression between brackets in \eqref{goal_1} is in Schwartz's space uniformly in the parameters $\lambda,t,T,n$, thus so is its Fourier inverse, and estimate \eqref{diff_dens_mart} follows.

We prove the next estimate by induction.
We denote $\bar{\nu}_{\mathcal{S}} (dz) = \ind_{\{|z| \le (T-t)^{1/\alpha}\}}\nu_{\mathcal{S}}(dz)$, 
the characteristic measure of the compound Poisson process $(N_u)_{u \ge 0}$.
We know  (see \cite{szto:10}) that the law of $(N_u)_{u \ge 0}$ actually writes $P_{N_u}=e^{-u \bar{\nu}_\mathcal S(\R^d)} \sum_{k=0}^{+\infty} \frac{u^k\bar{\nu}_{\mathcal S}^{*k}(dz) }{k!}$.
The same decomposition holds for $N_u^n$.
Consequently, when taking the difference:
$$
P_{N_{u}}(dz) - P_{N^n_{u}}(dz) = e^{-u \bar{\nu}_\mathcal S(\R^d)} \sum_{k=0}^{+\infty} \frac{u^k }{k!} \Big(\bar{\nu}_{\mathcal S}^{*k} - \bar{\nu}_{\mathcal{S}^n}^{*k} \Big)(dz)
+
\Big( e^{-u \bar{\nu}_\mathcal S(\R^d)} -e^{-u \bar{\nu}_{\mathcal{S}^n}(\R^d)} \Big)\sum_{k=0}^{+\infty} \frac{u^k\bar{\nu}_{\mathcal{S}^n}^{*k}(dz) }{k!}.
$$

From Taylor's formula, we deduce that the second term is controlled by
$\Delta_n\sum_{k=0}^{+\infty} \frac{u^k\bar{\nu}_{\mathcal{S}^n}^{*k}(dz) }{k!}$.
It remains us to prove that $|\bar{\nu}_{\mathcal S}^{*k}(dz)-\bar{\nu}_{\mathcal S^n}^{*k}(dz)| \le \Delta_n C^k\bar{m}^{*k}(dz)$, which can be done by a direct induction since from \textbf{[H-6]} and the definition of the measures $\nu_{\mathcal S},\nu_{\mathcal {S}^n}$, we have $|\nu_{\mathcal{S}^n}(dz)-\nu_{\mathcal{S}}(dz) |\le \Delta_n m(dz)$.
%
\end{proof}

We now turn to the estimate on the difference of the parametrix kernels $H-H_n$.
We have the following estimate:

\begin{lemma}\label{CTR_H-H_n}
There exists $C_{\ref{CTR_H-H_n}}>0$ for all $t<T$, for all $x,y\in \R^d$, such that:
$$
|H-H_n|(t,T,x,y)  \le C_{\ref{CTR_H-H_n}}\Delta_n\frac{\delta \wedge |y-x|^{\eta(\alpha\wedge1)}}{T-t}\bar{p}(t,T,x,y).
$$
\end{lemma}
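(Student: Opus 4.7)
The plan is to mimic the regularization argument of \cite{huan:15} establishing $|H| + |H_n| \le C\bar H$, while producing the extra $\Delta_n$ factor through the perturbation estimates supplied by Lemma \ref{CTRL_DERIV_DENS} and by the second line of assumption \textbf{[H-6]}. The first move is the decomposition
$$H - H_n = \bigl(L_t(x,\nabla_x) - L_t(y,\nabla_x)\bigr)(\tilde p - \tilde p_n)(t,T,x,y) + \bigl\{(L_t - L_t^n)(x,\nabla_x) - (L_t - L_t^n)(y,\nabla_x)\bigr\}\tilde p_n(t,T,x,y),$$
obtained by adding and subtracting $(L_t(x,\nabla_x) - L_t(y,\nabla_x))\tilde p_n$. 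Call these terms (I) and (II) and treat them separately.

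For term (II), expanding $(L_t - L_t^n)(z,\nabla_x)\tilde p_n$ yields a drift contribution $\langle (b-b_n)(t,z), \nabla_x \tilde p_n\rangle$ and an integral against the signed measure $\nu_t(z,\cdot) - \nu_t^n(z,\cdot)$. Evaluating at $z=x$ and $z=y$ and subtracting, the drift piece $\langle (b-b_n)(t,x) - (b-b_n)(t,y), \nabla_x \tilde p_n\rangle$ is controlled through the stability $|b - b_n| \le C\Delta_n$ of \textbf{[H-6]} combined with $|\nabla_x \tilde p_n| \le C(T-t)^{-1/\alpha}\bar p$, while the integral piece is taken against the double-difference measure $\nu_t(x,\cdot) - \nu_t(y,\cdot) - \nu_t^n(x,\cdot) + \nu_t^n(y,\cdot)$, which by the second line of \textbf{[H-6]} is dominated by $\Delta_n(\delta\wedge|x-y|^{\eta(\alpha\wedge 1)}) m$. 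The crucial point is that this double-difference bound is the exact stability analogue of the Hölder control of $\nu_t(\cdot,A)$ from \textbf{[H-5]} that powered the proof of $|H|\le C\bar H$ in \cite{huan:15}. Splitting the $\eta$-integral at the natural scale $(T-t)^{1/\alpha}$ (second-order Taylor on the near part producing $|\eta|^2|\nabla_x^2 \tilde p_n|$, zeroth- or first-order on the far part) and exploiting the density bound on $\tilde p_n$ together with the doubling of $\bar q$, one recovers the cancellation of $(T-t)^{-2/\alpha}$ against $\int_{|\eta|\le (T-t)^{1/\alpha}} |\eta|^2 m(d\eta) \sim (T-t)^{(2-\alpha)/\alpha}$, ultimately giving $|(\mathrm{II})| \le C\Delta_n \bar H$.

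For term (I), I run the same regularization argument but applied to $\tilde p - \tilde p_n$ rather than $\tilde p$. The operator $L_t(x,\nabla_x) - L_t(y,\nabla_x)$ still furnishes the $\delta\wedge|x-y|^{\eta(\alpha\wedge 1)}$ factor via the unperturbed Hölder stability \textbf{[H-5]} of $\nu_t(\cdot,A)$, so what remains is to substitute pointwise and derivative estimates on the difference $\tilde p - \tilde p_n$. These are precisely the content of Lemma \ref{CTRL_DERIV_DENS}, applied to the pseudo-differential operators $\mathrm{Id}, \nabla_x, \nabla_x^2$ of orders $\beta \in \{0,1,2\}$, yielding $|\nabla_x^k(\tilde p - \tilde p_n)(t,T,x,y)| \le C\Delta_n (T-t)^{-k/\alpha}\bar p(t,T,x,y)$. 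Inserting these into the near/far Taylor expansion described above reproduces the same temporal cancellation, now weighted by $\Delta_n$, so that $|(\mathrm{I})| \le C\Delta_n \bar H$.

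The main technical obstacle is the careful bookkeeping needed to transfer the regularization proof of \cite{huan:15} to our two terms, in particular handling the drift contribution when $\alpha > 1$ (where \textbf{[H-3]} only guarantees boundedness of $b$, not Hölder continuity), and making sure that this pointwise $(T-t)^{-1/\alpha}$ term can be absorbed into $\bar H$ in the same way as in the original argument. Modulo this transfer, Lemma \ref{CTRL_DERIV_DENS} and the second-order stability \textbf{[H-6]} mechanically generate the required $\Delta_n$ factor, and the lemma follows.
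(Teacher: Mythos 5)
Your decomposition of $H-H_n$ into $\bigl(L_t(x,\nabla_x)-L_t(y,\nabla_x)\bigr)(\tilde p-\tilde p_n)$ plus the double-difference operator acting on $\tilde p_n$ is exactly the paper's, and your treatment of both pieces matches: term (II) is bounded by pulling $\Delta_n\,\delta\wedge|y-x|^{\eta(\alpha\wedge1)}\,m$ out of the double-difference measure via \textbf{[H-6]} and regularizing the dominating operator $\mathcal L\tilde p_n$ by the small/large-jump split, while term (I) combines the H\"older control of $\nu_t(x,\cdot)-\nu_t(y,\cdot)$ from \textbf{[H-5]} with Lemma \ref{CTRL_DERIV_DENS}. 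The only cosmetic difference is that the paper applies Lemma \ref{CTRL_DERIV_DENS} once at the symbol level to the full operator $L_t(x,\nabla_x)-L_t(y,\nabla_x)$, whereas you apply it to $\mathrm{Id},\nabla_x,\nabla_x^2$ inside an explicit Taylor expansion; this is the same mechanism, and the subtlety you flag about the drift term for $\alpha>1$ is inherited verbatim from the regularization bound $|H|\le C\bar H$ in \cite{huan:15} and so does not need separate treatment here.
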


By definition of the parametrix kernel, we can split:
\begin{eqnarray*}
(H-H_n)(t,T,x,y) &=& \Big(L_t(x,\nabla_x) - L_t( y, \nabla_x) \Big) \Big(\tilde{p}-\tilde{p}_n \Big)(t,T,x,y)\\
&&+ \Big(L_t(x,\nabla_x) - L_t( y, \nabla_x)-L_t^n(x,\nabla_x)+ L_t^n( y, \nabla_x)  \Big)\tilde{p}_n(t,T,x,y).
\end{eqnarray*}
The first contribution is controlled by Lemma \ref{CTRL_DERIV_DENS}, since denoting by $l_t(z,p),l_t^n(z,p)$ the symbols of $L_t(z,\nabla_x),L_t^n(z,\nabla_x)$ respectively, we have $|l_t(z,p)| + |l_t^n(z,p)| \le C |p|^{\alpha}$. Thus, we focus on the second contribution.

\begin{lemma}
There exists $C>0$ for all $t<T$, for all $x,y\in \R^d$, such that:
$$
\left|\Big(L_t(x,\nabla_x) - L_t( y, \nabla_x)-L_t^n(x,\nabla_x)+ L_t^n( y, \nabla_x)  \Big)\tilde{p}_n(t,T,x,y) \right| 
\le C \Delta_n \frac{ \delta\wedge|y-x|^{\eta(\alpha \wedge 1)}}{T-t} \bar{p}(t,T,x,y).
$$

\end{lemma}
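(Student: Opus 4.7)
The plan is to split the second-order finite difference of generators into its drift and jump-integral contributions, handle each using the stability conditions of \textbf{[H-6]}, and finally recover the $\frac{1}{T-t}$ factor from the standard regularisation property of the frozen density $\tilde p_n$.

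First I would write
$$
\Big(L_t(x,\nabla_x) - L_t(y,\nabla_x) - L_t^n(x,\nabla_x) + L_t^n(y,\nabla_x)\Big)\tilde p_n = \langle \Delta b, \nabla_x \tilde p_n\rangle + J,
$$
where $\Delta b := b(t,x) - b(t,y) - b_n(t,x) + b_n(t,y)$ and $J$ denotes the analogous double-difference of the jump integrals. The drift term vanishes when $\alpha\le 1$ by \textbf{[H-3]}; when $\alpha>1$, it is bounded by combining $|\Delta b|\le 2C\Delta_n$ (from the drift stability in \textbf{[H-6]}) with the gradient estimate $|\nabla_x \tilde p_n|\le C(T-t)^{-1/\alpha}\bar p$ from \cite{huan:15}, and comparing with $\frac{\delta\wedge|y-x|^{\eta(\alpha\wedge 1)}}{T-t}\bar p$ by using that $T-t\le T$ is bounded.

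The core of the proof is the jump integral $J$. Using the change of variables $\zeta = \sigma(t,z)\xi$, I would rewrite each jump integral $I_t(z,\tilde p_n)$ against the measure $\nu_t(z,d\zeta)$ introduced in \textbf{[H-5]}. After accounting for the cutoff $\ind_{\{|\sigma(t,z)^{-1}\zeta|\le 1\}}$, the quadruple sum appearing in $J$ consolidates into an integral of the frozen-density increments against the signed measure
$$
\mu_t(A) := \nu_t(x,A) - \nu_t(y,A) - \nu_t^n(x,A) + \nu_t^n(y,A).
$$
By the mixed stability condition in \textbf{[H-6]}, $|\mu_t|(A) \le \Delta_n\,(\delta\wedge|y-x|^{\eta(\alpha\wedge 1)})\, m(A)$, which produces exactly the prefactor $\Delta_n\,(\delta\wedge|y-x|^{\eta(\alpha\wedge 1)})$ out of the integral.

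It then remains to show that the integral of the density increments $\tilde p_n(x+\zeta)-\tilde p_n(x) - \langle\nabla_x \tilde p_n(x),\zeta\rangle \ind_{\{|\zeta|\le 1\}}$ against the dominating measure $m$ is controlled by $C(T-t)^{-1}\bar p(t,T,x,y)$. This is precisely the regularisation estimate used in \cite{huan:15} to prove $|H|\le C\bar H$, and it follows from the standard splitting into small jumps (Taylor-expand the increment to second order and pair with $|\nabla^2_x\tilde p_n|\le C(T-t)^{-2/\alpha}\bar p$) and large jumps (use the pointwise bound $\tilde p_n\le C\bar p$ together with the tempered-stable tail of $m$). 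The step I expect to be the main obstacle is the consolidation of the four original jump integrals into a single integral against $\mu_t$: the indicators $\ind_{\{|\sigma(t,z)^{-1}\zeta|\le 1\}}$ depend both on the freezing point $z\in\{x,y\}$ and on whether we use $\sigma$ or $\sigma_n$, so the compensator terms must be regrouped, using the uniform ellipticity \textbf{[H-4]}, to a common cutoff, with the resulting remainders absorbed into the same type of bound.
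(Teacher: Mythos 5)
Your proposal follows essentially the same route as the paper: change variables to rewrite each jump integral against the push-forward measures $\nu_t(\cdot,dz),\nu_t^n(\cdot,dz)$, consolidate the four generator terms into an integral of the frozen-density increments against the signed measure $\nu_t(x,\cdot)-\nu_t(y,\cdot)-\nu_t^n(x,\cdot)+\nu_t^n(y,\cdot)$, pull out $\Delta_n\,\delta\wedge|y-x|^{\eta(\alpha\wedge 1)}$ via the mixed stability condition of \textbf{[H-6]}, and then bound the remaining $m$-integral by $C(T-t)^{-1}\bar p$ using the small/large-jump splitting and derivative estimates as in \cite{huan:15}; you also correctly identify the cutoff renormalisation as the delicate step, which the paper disposes of via symmetry of $\mu$ rather than the ellipticity-based absorption you suggest. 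One small caveat: for the drift contribution your claim that the comparison follows because $T-t\le T$ does not by itself produce the modulus $\delta\wedge|y-x|^{\eta(\alpha\wedge 1)}$, which can be arbitrarily small; the paper itself only remarks that this term ``stems from the stability assumption on $b-b_n$'' without spelling it out, so this is a shared imprecision rather than a divergence in method.
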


\begin{proof}

We focus on the integro-differential part of the generator. For the gradient part, the estimate stems from the stability assumption on $b-b_n$ (see \textbf{[H-6]}).
We recall the definition of:
$$
\nu_t(x,A) = \nu (\{z\in \R^d; \sigma(t,x)z \in A \}), \ \nu^n_t(x,A) = \nu (\{z\in \R^d; \sigma_n(t,x)z \in A \}).
$$

With this definition, observe that from the assumption on the L\'evy measure, we can write:
\begin{eqnarray*}
&&\Big(L_t(x,\nabla_x) - L_t( y, \nabla_x)-L_t^n(x,\nabla_x)+ L_t^n( y, \nabla_x)  \Big)\tilde{p}_n(t,T,x,y)\\
&=&\int_{\R^d} \Big( \tilde{p}_n(t,T,x+z,y) - \tilde{p}_n(t,T,x,y) -\langle \nabla_x \tilde{p}_n(t,T,x,y), z\rangle \ind_{\{|z|\le C(T-t)^{1/\alpha}\} } \Big)\\
&&\times\Big( \nu_t(x,dz)  -\nu_t(y,dz)  -\nu_t^n(x,dz)  +\nu_t^n(y,dz) \Big)\\
&\le& \Delta_n \delta\wedge |x-y|^{\eta(\alpha\wedge 1)}  \mathcal{L} \tilde{p}_n(t,T,x,y),
\end{eqnarray*}

where we define:
$$
\mathcal{L} \tilde{p}_n(t,T,x,y)= \int_{\R^d} \Big( \tilde{p}_n(t,T,x+\rho \theta,y) - \tilde{p}_n(t,T,x,y) -\langle \nabla_x \tilde{p}_n(t,T,x,y), \rho \theta\rangle \ind_{\{ \rho \le C(T-t)^{1/\alpha}\} } \Big) \frac{\bar{q}(\rho)}{\rho^{1+\alpha}}\mu(d\theta).
$$

The proof follows from the upper bound $\mathcal{L} \tilde{p}_n(t,T,x,y)\le C  \frac{1}{T-t}\bar{p}(t,T,x,y)$.
%
%
%
%
From now, we follow the proof in \cite{huan:15}. 
The main idea is to split the operator in small and big jump contributions.
Let us define the following operators:
\begin{eqnarray*}
\mathcal{L}_M \varphi (x) &=& \int_0^{(T-t)^{1/\alpha}} \int_{S^{d-1}} \Big(\varphi( x + \rho \theta ) - \varphi(x) - \langle \nabla_x \varphi(x),\rho \theta \rangle \Big)  \bar{q}(\rho)\frac{d\rho}{\rho^{1+\alpha}} \mu(d\theta) \\
\mathcal{L}_N \varphi (x) &=&\int_{(T-t)^{1/\alpha}}^{+\infty} \int_{S^{d-1}} \Big(\varphi( x +\rho \theta ) - \varphi(x) \Big)\bar{q}(\rho)\frac{d\rho}{\rho^{1+\alpha}}\mu(d\theta).
\end{eqnarray*}

Observe that due to the symmetry of $\mu$, we can change the cut-off function so that the identity holds:
$$
\big(L_t(x,\nabla_x)-L_t(\theta_{t,T}(y),\nabla_x)\big)\varphi (x) \le \mathcal{L}_M \varphi (x) + \mathcal{L}_N \varphi (x).
$$

We now handle separately the small and big jumps.
Let us start with the small jumps.
We claim that:
\begin{equation}\label{ctrl_op_mart}
\mathcal{L}_M  \tilde{p}(t,T,x,y) \le C \frac{\delta \wedge |y-x|^{\eta ( \alpha \wedge 1)}}{T-t} \bar{p}(t,T,x,y).
\end{equation}

Recalling the notations of \cite{huan:15}, the frozen density decomposes as
\begin{equation}\label{DESINTEGRATION_DENS_LEVY_ITO}
\tilde{p}(t,T,x,y) =p_\mathcal{S}(T-t,y - x)= \int_{\R^d} p_M(T-t, y- x - \xi ) P_{N_{T-t}}(d\xi).
\end{equation}

Plugging this identity in the definition of the operator $\mathcal{L}_M$ yields:
\begin{eqnarray*} 
\mathcal{L}_M\tilde{p}(t,T,x,y) = \int_{\R^d}P_{N_{T-t}}(d\xi)
\int_0^{(T-t)^{1/\alpha}} \int_{S^{d-1}} \Big(p_M(T-t, y - x-\rho \theta - \xi ) - p_M(T-t, y - x - \xi )\\
- \langle \nabla_x p_M(T-t,y - x - \xi ),\rho\theta \rangle \Big)  \bar{q}(\rho)\frac{d\rho}{\rho^{1+\alpha}} \mu(d\theta).
\end{eqnarray*}

Observe that since the jumps are truncated, the density of the martingale $p_M$ is smooth.
Moreover, the successive derivatives of $p_M$ are be controlled as follows:
$$
\big|\nabla_x^\beta p_M(u,x)\big| \le C_m u^{-|\beta|/\alpha}u^{-d/\alpha} \left( 1+\frac{|x|}{u^{1/\alpha}}\right)^{-m}, \ \ \forall m \ge1.
$$
See the proof of Lemma 2 in \cite{szto:10} or \cite{huan:15}.
We now use Taylor's formula to expand under the integral:
\begin{eqnarray*}
&&p_M(T-t,y - x-\rho\theta - \xi ) - p_M(T-t, y - x - \xi ) - \langle \nabla_x p_M(T-t, y - x - \xi ),\rho\theta \rangle\\
&=& \frac12\nabla_x^2 p_M(T-t, y - x - \xi )\rho^2 \theta^{(2)} 
+\int_0^1  \nabla_x^3  p_M(T-t, y - x -(1-\lambda) \rho\theta - \xi ) \rho^3\theta^{(3)}\frac{\lambda^2}{2}d \lambda.
\end{eqnarray*}

Now, from the control on the derivative, we obtain for the main part:
\begin{eqnarray*}
&&\left| \int_{\R^d}P_{N_{T-t}}(d\xi) \int_0^{(T-t)^{1/\alpha}} \int_{S^{d-1}} \frac12\nabla_x^2 p_M(T-t,y - x - \xi )\rho^2 \theta^{(2)}  \bar{q}(\rho)\frac{d\rho}{\rho^{1+\alpha}} \mu(d\theta)\right|\\
&\le& C  \int_{\R^d}P_{N_{T-t}}(d\xi) \int_0^{(T-t)^{1/\alpha}} \int_{S^{d-1}} \left(\frac{\rho}{(T-t)^{1/\alpha}}\right)^2 
\frac{(T-t)^{-d/\alpha}}{ \left(1+ \frac{|y - x - \xi|}{(T-t)^{1/\alpha}} \right)^{-m}} \bar{q}(\rho)\frac{d\rho}{\rho^{1+\alpha}} \mu(d\theta)\\
&=&\int_{\R^d}P_{N_{T-t}}(d\xi)\frac{(T-t)^{-d/\alpha}}{ \left(1+ \frac{|y - x - \xi|}{(T-t)^{1/\alpha}} \right)^{-m}} 
\times  \underbrace{\int_0^{(T-t)^{1/\alpha}}  \left(\frac{\rho}{(T-t)^{1/\alpha}}\right)^2 \bar{q}(\rho)\frac{d\rho}{\rho^{1+\alpha}} }_{\le \frac{1}{T-t}}\times \mu(S^{d-1}).
\end{eqnarray*}

Finally, we recall that Sztonyk \cite{szto:10} proved that:
$\int_{\R^d}P_{N_{T-t}}(d\xi)\frac{(T-t)^{-d/\alpha}}{ \left(1+ \frac{|\theta_{t,T}(y) - x - \xi|}{(T-t)^{1/\alpha}} \right)^{m}} 
\le \bar{p}(t,T,x,y).$
%
%
Consequently, the bound \eqref{ctrl_op_mart} hold for the main term. We now turn to the remainder. 
Putting the absolute value inside the integrals allows us to interchange the order of integration.
We thus have:
\begin{eqnarray*}
\int_{S^{d-1}}\int_0^{(T-t)^{1/\alpha}}  \int_0^1 
\left(\int_{\R^d}P_{N_{T-t}}(d\xi)  \big| \nabla_x^3  p_M(T-t, y - x -(1-\lambda) \rho\theta - \xi )\big| \right)
 \rho^3 |\theta^{(3)} | \frac{\lambda^2}{2}d \lambda  \bar{q}(\rho)\frac{d\rho}{\rho^{1+\alpha}}\mu(d\theta).
\end{eqnarray*}

We now use the control on the derivative and write:
\begin{eqnarray*}
\int_{\R^d}P_{N_{T-t}}(d\xi)  \big| \nabla_x^3  p_M(T-t, y - x -(1-\lambda) \rho\theta - \xi )\big|
&\le& \frac{C}{(T-t)^{3/\alpha}}\int_{\R^d}P_{N_{T-t}}(d\xi)   \frac{(T-t)^{-d/\alpha}}{ \left(1+ \frac{|y - x -(1-\lambda) \rho\theta- \xi|}{(T-t)^{1/\alpha}} \right)^{-m}}\\
&\le& \frac{C}{(T-t)^{3/\alpha}} \frac{(T-t)^{-d/\alpha}}{\left( 1 + \frac{|x+(1-\lambda) \rho\theta -y|}{(T-t)^{1/\alpha}}\right)^{\alpha +\gamma}}\bar{q}(|x+(1-\lambda) \rho\theta -y|).
\end{eqnarray*}

Plugging the last inequality gives the upper bound for the remainder:
\begin{eqnarray*}
\int_{S^{d-1}}\int_0^{(T-t)^{1/\alpha}}  \int_0^1  \frac{(T-t)^{-d/\alpha}}{\left( 1 + \frac{|x+(1-\lambda) \rho\theta -y|}{(T-t)^{1/\alpha}}\right)^{\alpha +\gamma}}\bar{q}(|x+(1-\lambda) \rho\theta -y|)
\left( \frac{\rho}{(T-t)^{1/\alpha}}\right)^3\frac{\lambda^2}{2}d \lambda  \bar{q}(\rho)\frac{d\rho}{\rho^{1+\alpha}} \mu(d\theta).
\end{eqnarray*}

Now, we have to discuss according to the global regime of the density.
If $|x-y| \le \frac12 (1-\lambda) \rho$, or if $\frac12 (1-\lambda) \rho \le  |x-y| $, then 
$$
\frac{(T-t)^{-d/\alpha}}{\left( 1 + \frac{|x+(1-\lambda) \rho \theta -y|}{(T-t)^{1/\alpha}}\right)^{\alpha +\gamma}}\bar{q}(|x+(1-\lambda) \rho\theta -y|)
\le \frac{(T-t)^{-d/\alpha}}{\left( 1 + \frac{|x-y|}{(T-t)^{1/\alpha}}\right)^{\alpha +\gamma}}\bar{q}(|x-y|) \asymp \bar{p}(t,T,x,y).
$$

On the other hand, when $\frac12 |x-y|\le (1-\lambda) \rho \le 3/2 |x-y|$, since $\rho \le (T-t)^{1/\alpha}$, the diagonal regime holds and
$$
\frac{(T-t)^{-d/\alpha}}{\left( 1 + \frac{|x+(1-\lambda) \rho\theta -y|}{(T-t)^{1/\alpha}}\right)^{\alpha +\gamma}}\bar{q}(|x+(1-\lambda) \rho\theta -y|) \le (T-t)^{-d/\alpha} \asymp \bar{p}(t,T,x,y).
$$

Consequently, in both cases, we have:
$$
\frac{(T-t)^{-d/\alpha}}{\left( 1 + \frac{|x+(1-\lambda) \rho\theta -y|}{(T-t)^{1/\alpha}}\right)^{\alpha +\gamma}}\bar{q}(|x+(1-\lambda) \rho\theta -y|)  \le \bar{p}(t,T,x,y),
$$

and we thus have:
\begin{eqnarray*}
&&\int_{S^{d-1}}\int_0^{(T-t)^{1/\alpha}}  \int_0^1  \frac{(T-t)^{-d/\alpha}}{\left( 1 + \frac{|x+(1-\lambda) \rho\theta - y|}{(T-t)^{1/\alpha}}\right)^{\alpha +\gamma}}\bar{q}(|x+(1-\lambda) \rho\theta -y|)
\left( \frac{\rho}{(T-t)^{1/\alpha}}\right)^3\frac{\lambda^2}{2}d \lambda \bar{q}(\rho) \frac{d\rho}{\rho^{1+\alpha}} \mu(d\theta)\\
&\le& 
\bar{p}(t,T,x,y)\int_{S^{d-1}} \int_0^{(T-t)^{1/\alpha}}  \int_0^1  \left( \frac{\rho}{(T-t)^{1/\alpha}}\right)^3\frac{\lambda^2}{2}d \lambda  \bar{q}(\rho) \frac{d\rho}{\rho^{1+\alpha}} \mu(d\theta)\le  C \frac{1}{T-t} \bar{p}(t,T,x,y).
\end{eqnarray*}

We now turn to the large jumps.
Observe that for the large jumps, the L\'evy measure is not singular and we can write:
\begin{eqnarray*}
\left|\mathcal{L}_N \tilde{p}(t,T,x,y)\right| \le \left|\int_{(T-t)^{1/\alpha}}^{+\infty} \int_{S^{d-1}} \tilde{p}(t,T x +\rho \theta,y) \bar{q}(\rho)\frac{d\rho}{\rho^{1+\alpha}}\mu(d\theta) \right| + \tilde{p}(t,T,x,y) \underbrace{\int_{(T-t)^{1/\alpha}}^{+ \infty} \int_{S^{d-1}} \bar{q}(\rho)\frac{d\rho}{\rho^{1+\alpha}} \mu(d\theta)}_{\le C \frac{1}{T-t}}.
\end{eqnarray*}
We can thus focus on the remaining integral.
Assume first that the diagonal regime holds, that is $|x-y|\le (T-t)^{1/\alpha}$.
In that case, note that $\bar{p}(t,T,x,y) \asymp (T-t)^{1/\alpha}$.
Also, since we have the global estimate $ \tilde{p}(t,T,x+\rho\theta,y)\le C(T-t)^{-d/\alpha}$, we can take out the density and the integral yields the estimate $\big|\mathcal{L}_N \tilde{p}(t,T,x,y) \big| \le \frac{C}{T-t}\bar{p}(t,T,x,y)$.

Assume now that the off diagonal regime holds, that is $|y-x| \ge (T-t)^{1/\alpha}$.
The regime of $\tilde{p}(t,T,x+\rho\theta,y) $ is given by $|y-x-\rho\theta|$.
Thus, thanks to the triangle inequality, when $|y-x|\le 1/2 \rho$, or when $\rho \le 1/2|y-x|$,
the density $\tilde{p}(t,T,x+\rho\theta,y) $ is off-diagonal with $\tilde{p}(t,T,x+\rho\theta,y) \le C \bar p(t,T,x,y)$, and we can conclude as in the diagonal case.
Consequently, the problematic case is when $\rho \asymp |y-x|$. 
Indeed, in this case, $\tilde{p}(t,T,x+\rho \theta,y) $ can be in diagonal regime, when $\tilde{p}(t,T,x,y)$ is still in the off-diagonal regime.

Assume first that \textbf{[H-1-a]} holds, that is that $\mu$ has a density with respect to the Lebesgue measure in the sphere: $\mu(d\theta)=f_\mu(\theta)d\theta$.
The remaining integral becomes:
\begin{eqnarray*}
&&\int_{1/2|y-x|}^{3/2|y-x|}  \int_{S^{d-1}}  \tilde{p}(t,T,x+\rho\theta,y) \ind_{\{\rho\ge (T-t)^{1/\alpha} \}} \frac{\bar{q}(\rho)}{\rho^{d+\alpha}} f_\mu(\theta) \rho^{d-1}d\rho d\theta.
\end{eqnarray*}
Now, since $\rho\asymp |y-x|$, we can take $\frac{\bar{q}(\rho)}{\rho^{\alpha+d}}$ out of the integral to get:
\begin{eqnarray*}
&&\int_{1/2|y-x|}^{3/2|y-x|}  \int_{S^{d-1}}  \tilde{p}(t,T,x+\rho \theta ,y) \ind_{\{\rho\ge (T-t)^{1/\alpha} \}} \frac{\bar{q}(\rho)}{\rho^{d+\alpha}} f_\mu(\theta) \rho^{d-1}d\rho d\theta\\
&\le& C\frac{\bar{q}(|y-x|)}{|y-x|^{\alpha+d}}\int_0^{+\infty} \int_{S^{d-1}}  \tilde{p}(t,T,x+\rho \theta,y) \ind_{\{\rho\ge (T-t)^{1/\alpha} \}} f_\mu(\theta) \rho^{d-1}d\rho d\theta.
\end{eqnarray*}


Bounding $f_\mu$ by some constant, we recover the integral of the density $\tilde{p}$ which is lower that one.
Consequently, we have:
\begin{eqnarray*}
\int_{1/2|y-x|}^{3/2|y-x|}  \int_{S^{d-1}}  \tilde{p}(t,T,x+\rho \theta,y) \ind_{\{\rho\ge (T-t)^{1/\alpha} \}} \frac{\bar{q}(\rho)}{\rho^{1+\alpha}}d\rho f_\mu(\theta) d\theta
\le\frac{C}{T-t} \frac{T-t}{|y-x|^{\alpha+d}}\bar{q}(|y-x|) \asymp \frac{C}{T-t} \bar{p}(tT,x,y).
\end{eqnarray*}

Now, assume that \textbf{[H-1-b]} holds.
In this case, we can take out $\frac{\bar{q}(r)}{r^{1+\alpha}}$.
\begin{eqnarray*}
\int_{\frac12|y-x|}^{\frac32|y-x|}  \int_{S^{d-1}}  \tilde{p}(t,T,x+\rho \theta,y) \ind_{\{\rho\ge (T-t)^{\frac{1}{\alpha}} \}} \frac{\bar{q}(\rho)}{\rho^{1+\alpha}}d\rho \mu(d\theta)
\le \frac{\bar{q}(|y-x|)}{|y-x|^{1+\alpha}}\int_0^{+\infty} \int_{S^{d-1}}  \tilde{p}(t,T,x+\rho \theta ,y) \ind_{\{ \rho\ge (T-t)^{\frac{1}{\alpha}} \}} d\rho \mu(d\theta). 
\end{eqnarray*}

Rewriting the right hand side to make the time dependencies  appear : 
\begin{eqnarray*}
\frac{\bar{q}(|y-x|)}{|y-x|^{1+\alpha}}=\frac{1}{T-t}\frac{(T-t)^{1+\frac{\gamma-d}{\alpha}}}{|y-x|^{1+\alpha}}\bar{q}(|y-x|) \times (T-t)^{\frac{d-\gamma}{\alpha}}\le C \frac{1}{T-t}\frac{(T-t)^{1+\frac{\gamma-d}{\alpha}}}{|y-x|^{1+\alpha}}\bar{q}(|y-x|).
\end{eqnarray*}
In the last inequality, we recall that $\gamma \le d$, so that $(T-t)^{\frac{d-\gamma}{\alpha}} \le 1$.
Now, we write:
$$
\frac{(T-t)^{1+\frac{\gamma-d}{\alpha}}}{|y-x|^{1+\alpha}}\bar{q}(|y-x|)
=\frac{(T-t)^{1+\frac{\gamma-d}{\alpha}}}{|y-x|^{\alpha+\gamma}} \times |y-x|^{\gamma-1}\bar{q}(|y-x|).
$$
Define now $Q(|y-x|) = \min(1,|y-x|^{\gamma-1})\bar{q}(|y-x|),$
we finally obtain:
$$
\frac{C}{T-t}\frac{(T-t)^{1+\frac{\gamma-d}{\alpha}}}{|y-x|^{1+\alpha}}\bar{q}(|y-x|) \le \frac{C}{T-t}\frac{(T-t)^{1+\frac{\gamma-d}{\alpha}}}{|y-x|^{\alpha+\gamma}}Q(|y-x|) \le \frac{C}{T-t}\bar{p}(t,T,x,y).
$$

In other words, we can correct the wrong decay by deteriorating the temperation.
Consequently, the global upper bound for the kernel is the one announced.

%
%
%
%
%

%
%
\end{proof}

\section*{Acknowledgments}
This article was prepared within the framework of a subsidy granted to the HSE by the Government of the Russian Federation for the implementation if the Global Competitiveness Program.

\bibliographystyle{alpha}
\bibliography{MyLibrary.bib}

\end{document}